\patchcmd{\@setaddresses}{\indent}{\noindent}{}{}
\patchcmd{\@setaddresses}{\indent}{\noindent}{}{}
\patchcmd{\@setaddresses}{\indent}{\noindent}{}{}
\patchcmd{\@setaddresses}{\indent}{\noindent}{}{}
\newtheorem{theorem}{Theorem}[section]
\newtheorem{proposition}[theorem]{Proposition}
\newtheorem*{theorem*}{Theorem}
\newtheorem*{proposition*}{Proposition}
\newtheorem{alphthm}{Theorem}
\theoremstyle{definition}
\newtheorem{example}[theorem]{Example}
\theoremstyle{remark}
\declaretheorem[name=Remark,qed={$\blacktriangle$},sibling=theorem]{remark}
\newtheoremstyle{questions}{}{}{\color{red}}{}{\color{blue}\bfseries}{}{ }{}
\theoremstyle{questions}
\newcommand{\tensor}[1]{\otimes_{#1}} 
\DeclareMathOperator{\cotensor}{\square} 
\newcommand{\op}[1]{#1^{\mathrm{o}}} 
\newcommand{\Ae}{A^{\mathrm{e}}}
\newcommand{\Ao}{A^{\mathrm{o}}}
\newcommand{\cC}{{\mathcal C}}
\newcommand{\cH}{{\mathcal H}}
\newcommand{\cO}{{\mathcal O}}
\newcommand{\cU}{{\mathcal U}}
\newcommand{\cV}{{\mathcal V}}
\newcommand{\Sscript}[1]{#1} 
\newcommand{\ls}[1]{{}_{\Sscript{s}} {#1}}
\newcommand{\lt}[1]{{}_{\Sscript{t}} {#1}}
\newcommand{\rs}[1]{{#1}{}_{\Sscript{s}} }
\newcommand{\rt}[1]{{#1}{}_{\Sscript{t}}}
\newcommand{\tM}[1]{\lt{#1}}
\newcommand{\Mt}[1]{\rt{#1}}
\newcommand{\Mto}[1]{{#1}{}_{\Sscript{\op{t}}}}
\newcommand{\sM}[1]{\ls{{#1}}}
\newcommand{\sMt}[1]{{}_{\Sscript{s}} {#1}{}_{\Sscript{t}}}
\newcommand{\tMs}[1]{{}_{\Sscript{t}} {#1}{}_{\Sscript{s}}}
\newcommand{\sMs}[1]{{}_{\Sscript{s}} {#1}{}_{\Sscript{s}}}
\newcommand{\tMt}[1]{{}_{\Sscript{t}} {#1}{}_{\Sscript{t}}}
\newcommand{\sMto}[1]{{}_{\Sscript{s}} {#1}{}_{\Sscript{\op{t}}}}
\newcommand{\soMt}[1]{{}_{\Sscript{\op{s}}} {#1}{}_{\Sscript{t}}}
\newcommand{\tMso}[1]{{}_{\Sscript{t}} {#1}{}_{\Sscript{\op{s}}}}
\newcommand{\toMs}[1]{{}_{\Sscript{\op{t}}} {#1}{}_{\Sscript{s}}}
\newcommand{\ZZ}{\mathbb{Z}}
\newcommand{\C}{\mathbb{C}}
\newcommand{\K}{\Bbbk}
\newcommand{\coinv}[2]{\prescript{{\sf co}{#2}}{}{#1}} 
\newcommand{\ot}{\otimes}
\newcommand{\tak}[1]{\times_{\Sscript{#1}}}
\newcommand{\img}{\operatorname{Im}}
\newcommand{\calg}{\mathrm{CAlg}}
\begin{document}
\allowdisplaybreaks

\title[Galois-type correspondence for Hopf algebroids]{A remark on the Galois-type correspondence between ideal coideals and comodule subrings of a Hopf algebroid}

\author{P.~Saracco}
\address{D\'epartement de Math\'ematique, Universit\'e Libre de Bruxelles, Boulevard du Triomphe, B-1050 Brussels, Belgium.}
\email{paolo.saracco@ulb.be}
\urladdr{\url{https://sites.google.com/view/paolo-saracco}}
\urladdr{\url{https://paolo.saracco.web.ulb.be}}

\thanks{PS is a Charg\'e de Recherches of the Fonds de la Recherche Scientifique - FNRS and a member of the National Group for Algebraic and Geometric Structures and their Applications (GNSAGA-INdAM). He would like to thank Joost Vercruysse for discussing the topic together}
\keywords{Hopf algebroids, ideal coideals, coideal subrings, Galois correspondence, Hopf algebras}
\subjclass[2010]{16T05, 16T15, 06A15 18A30}

\begin{abstract}
We exhibit a bijective correspondence between certain left ideal coideals in a Hopf algebroid for which the resulting quotient is a coequalizer and certain right coideal subrings which are themselves an equalizer, remarkably improving a recent result of the author obtained in collaboration with L.~El Kaoutit, A.~Ghobadi and J.~Vercruysse. Interpreting this result in the Hopf algebra setting provides a bijective correspondence which extends the previously known cases.
\end{abstract}

\pagestyle{headings}

\maketitle


\section*{Introduction}

The study of Galois-type correspondences between ideal coideals and coideal subalgebras in the theory of Hopf algebras has a long tradition of outstanding results, starting from the renowned paper \cite{Takeuchi:1972} of Takeuchi, where he establishes a bijective correspondence between sub-Hopf algebras of a commutative Hopf algebra $H$ and normal Hopf ideals in $H$. From this, one might deduce that the category of commutative and cocommutative Hopf algebras over a field (and hence that of affine abelian group schemes) is an abelian category: see \cite[Corollary 4.16 and Theorem 5.4]{Takeuchi:1972}. A cocommutative analogue of Takeuchi's result, establishing a one-to-one correspondence between Hopf subalgebras and left ideal coideals in a cocommutative Hopf algebra, was exhibited by Newman in \cite{Newman} and it played a central role in showing that the category of cocommutative Hopf algebras over a field is semi-abelian (see \cite{GSV}). More cases of interest from the literature are reviewed in \Cref{sec:HopfAlgs}.

Very recently, the interest in this correspondence has been extended to the Hopf algebroid setting in \cite{AryanLaiachiPaoloJoost}, with the aim of discussing a quotient theory of affine groupoid schemes from a Hopf algebraic perspective. A deeper analysis of the results therein led to realise that a necessary condition for a left ideal coideal $I$ in a Hopf algebroid $\cH$ to be of the form $\cH B^+$ for a certain coideal subring $B$ is that the quotient $\cH/I$ is the coequalizer of the pair $\xymatrix @C=70pt{\cH \cotensor^{\cH/I} \cH \ar@<+0.55ex>[r]|(0.7){\,\varepsilon \tensor{A} \cH\,} \ar@<-0.55ex>[r]|(0.42){\,\cH \tensor{A} \varepsilon\,} & \cH}$. Similarly, a necessary condition for a coideal subring $B$ to be of the form $\coinv{\cH}{\frac{\cH}{I}}$ for a certain left ideal coideal $I$ is that $B$ is the equalizer of the pair $\xymatrix @C=70pt{\cH \ar@<+0.55ex>[r]|(0.55){\,\iota \tensor{B} \cH\,} \ar@<-0.55ex>[r]|(0.27){\,\cH \tensor{B} \iota\,} & \cH \tensor{B} \cH}$. These conditions can be made sufficient if we ask certain maps to be injective. Namely, we are going to prove the following central result (refer to \Cref{ssec:purity} for the notion of purity used therein).

\begin{alphthm}[\Cref{thm:main} below]
Let $\cH$ be a left Hopf algebroid over $A$ such that $\sM{\cH} = {}_{{A\tensor{}\op{1}}}\cH$ is $A$-flat.
Then we have an inclusion-preserving bijective correspondence
\[
\xymatrix @R=0pt{
{\left\{ \begin{array}{c} \text{left ideal coideals } I \text{ for which} \\ \cH \cotensor^{\frac{\cH}{I}} \cH \rightrightarrows \cH \to \cH/I \text{ is a} \\ \text{coequalizer and } \coinv{\cH}{\frac{\cH}{I}} \subseteq \cH \text{ is} \\  \text{left and right }\cH\text{-pure} \\ \text{as } \op{A}\text{-bimodule} \end{array} \right\}} \ar@<+0.5ex>[r]^-{\Psi} & {\left\{ \begin{array}{c} \text{right } \cH\text{-comodule } \op{A}\text{-subrings } B \\ \text{via }t \text{ for which } B \to \cH \rightrightarrows \cH \tensor{B} \cH \\ \text{is an equalizer, } B \subseteq \cH \text{ is left}  \\ \text{and right }\cH\text{-pure as } \op{A}\text{-bimodule and} \\ B \text{ is compatible with the translation map}\end{array} \right\}} \ar@<+0.5ex>[l]^-{\Phi} \\
I \ar@{|->}[r] & \coinv{\cH}{\frac{\cH}{I}} \\
\cH B^+ & B. \ar@{|->}[l]
}
\]
\end{alphthm}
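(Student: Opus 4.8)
The plan is to show that $\Psi$ and $\Phi$ are well defined and mutually inverse; that both preserve inclusions is immediate, since $I\subseteq I'$ forces $\coinv{\cH}{\cH/I}\subseteq\coinv{\cH}{\cH/I'}$ (a coarser quotient coaction has more coinvariants) and $B\subseteq B'$ forces $\cH B^+\subseteq \cH {B'}^{+}$. The real content is the pair of reconstruction identities $\Phi\circ\Psi=\id$ and $\Psi\circ\Phi=\id$. Two structural facts underlie the whole argument. First, the flatness of $\sM{\cH}$ lets me compute the cotensor product $\cH\cotensor^{\cH/I}\cH$ as a genuine equalizer (a kernel of a difference of coactions), so that the coequalizer hypothesis on $I$ can be manipulated concretely. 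Second, the $\cH$-purity of the inclusions $\coinv{\cH}{\cH/I}\subseteq\cH$ and $B\subseteq\cH$ as $\op{A}$-bimodules is exactly what guarantees that the functors $\cH\tensor{A}-$, $-\tensor{A}\cH$, $\cH\tensor{B}-$ and $-\tensor{B}\cH$ preserve the relevant (co)equalizers. It is this preservation that upgrades the necessary conditions recorded before the theorem into sufficient ones.

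First I would settle well-definedness, keeping in mind a small circularity to be resolved by proving the composites first. For $\Psi$, given $I$ in the left-hand set, put $B:=\coinv{\cH}{\cH/I}$; that $B$ is a right $\cH$-comodule $\op{A}$-subring via $t$ is formal, and its $\cH$-purity is part of the defining data of the left-hand set. The equalizer property $B\to\cH\rightrightarrows\cH\tensor{B}\cH$ and the compatibility of $B$ with the translation map I would deduce by dualising the coequalizer presentation of $\cH/I$, using flatness to pass between the cotensor and the tensor descriptions. For $\Phi$, given $B$ in the right-hand set, put $I:=\cH B^+$ with $B^+=\ker(\varepsilon|_B)$; that $I$ is a left ideal coideal is standard. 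That $\cH\cotensor^{\cH/I}\cH\rightrightarrows\cH\to\cH/I$ is a coequalizer is where the equalizer hypothesis on $B$ together with $\cH$-purity enters. Finally, the requested purity of $\coinv{\cH}{\cH/I}$ follows once one knows $\coinv{\cH}{\cH/\cH B^+}=B$, i.e. once $\Psi\circ\Phi=\id$ is established; hence I would prove the composites before declaring $\Phi$ well defined.

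For $\Phi\circ\Psi=\id$, start from $I$, set $B=\coinv{\cH}{\cH/I}$, and prove $\cH B^+=I$. The inclusion $\cH B^+\subseteq I$ is easy: elements of $B^+$ are coinvariant and killed by $\varepsilon$, so $\cH B^+$ maps to $0$ in $\cH/I$. For the reverse inclusion $I\subseteq\cH B^+$ I would use the coequalizer hypothesis decisively: it presents $\cH/I$ as the cokernel of $\varepsilon\tensor{A}\cH-\cH\tensor{A}\varepsilon$ restricted to $\cH\cotensor^{\cH/I}\cH$, and, identifying this cotensor product via the flatness of $\sM{\cH}$, unwinding the presentation shows that the kernel $I$ of $\cH\to\cH/I$ is exactly $\cH B^+$. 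For $\Psi\circ\Phi=\id$, start from $B$, set $I=\cH B^+$, and prove $\coinv{\cH}{\cH/I}=B$. The inclusion $B\subseteq\coinv{\cH}{\cH/I}$ is clear. The reverse inclusion is the crux: given a coinvariant $h$, I would apply the translation map $h\mapsto h_+\tensor{\op{A}}h_-$ of the left Hopf algebroid and the compatibility of $B$ with it to rewrite $h$ through elements that the equalizer condition $B=\operatorname{eq}\lr{\cH\rightrightarrows\cH\tensor{B}\cH}$ forces into $B$, the $\cH$-purity of $B\subseteq\cH$ being what licenses applying the equalizer description after base change.

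The main obstacle is precisely this last reverse inclusion $\coinv{\cH}{\cH/\cH B^+}\subseteq B$. In the Hopf algebra case the antipode contracts a coinvariant element directly into the coideal subalgebra; in the algebroid case the antipode is replaced by the translation map, which takes values in $\cH\tensor{\op{A}}\cH$, so the contraction must be organised one tensor-leg at a time, and the bookkeeping of the $s$-, $t$- and $\op{A}$-module structures is delicate. This is where both extra hypotheses earn their keep: the compatibility of $B$ with the translation map is what keeps the contraction inside $B$, and the $\cH$-purity of $B$ as an $\op{A}$-bimodule is what lets the equalizer presentation survive the tensor products appearing in the computation. Once this inclusion is in hand, the well-definedness loophole closes and the remaining verifications reduce to routine diagram chases.
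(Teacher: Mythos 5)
Your overall architecture coincides with the paper's: reduce everything to the two composite identities, observe that the purity of $\coinv{\cH}{\frac{\cH}{\cH B^+}}$ comes for free once $\Psi\Phi=\id$ is known, and use the translation map together with the equalizer hypothesis for $\Psi\Phi=\id$. That composite is handled correctly in spirit: your ``contraction by the translation map'' is exactly what the paper packages as the statement that $\xi\colon\cH\tensor{B}\cH\to\frac{\cH}{\cH B^+}\tensor{A}\cH$ is an isomorphism induced by $\beta^{-1}$ under \eqref{eq:Bbeta}, which transports the equalizer $B\to\cH\rightrightarrows\cH\tensor{B}\cH$ onto the equalizer defining $\coinv{\cH}{\frac{\cH}{\cH B^+}}$.

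The gap is in the other composite, $\Phi\Psi=\id$. You propose to ``identify the cotensor product via the flatness of $\sM{\cH}$'' and then read off $I=\cH B^+$ by unwinding the coequalizer presentation. Flatness of $\sM{\cH}$ does not identify $\cH\cotensor^{\cH/I}\cH$ with anything involving $B=\coinv{\cH}{\frac{\cH}{I}}$; it only makes the cotensor product a well-behaved equalizer. The coequalizer hypothesis says that $I$ is the image of $\varepsilon\tensor{A}\cH-\cH\tensor{A}\varepsilon$ restricted to $\cH\cotensor^{\cH/I}\cH$, and the inclusion $\cH B^+\subseteq I$ is then easy; but the reverse inclusion requires knowing that \emph{every} element of $\cH\cotensor^{\cH/I}\cH$ has the form $\sum_i (x_i)_1\tensor{A}(x_i)_2\, b_i$ with $b_i\in B$, i.e., that $\zeta\colon\cH\tensor{\op{A}}B\to\cH\cotensor^{\cH/I}\cH$, $x\tensor{\op{A}}b\mapsto\sum x_1\tensor{A}x_2 b$, is surjective. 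This is precisely where the left Hopf structure (invertibility of $\beta$) and the $\cH$-purity of $B\subseteq\cH$ must enter \emph{in this direction as well} (see \Cref{prop:PhiPsiI} and diagram \eqref{eq:Izeta}): purity keeps $\cH\tensor{\op{A}}B$ an equalizer after tensoring, and $\beta$ matches it with $\cH\cotensor^{\cH/I}\cH$. Since your plan deploys the translation map and purity only for $\Psi\Phi=\id$, the argument for $\Phi\Psi=\id$ as written yields no more than $\cH B^+\subseteq I$, i.e., the Galois connection already available for bialgebroids, not the bijection. A smaller misattribution: the coequalizer property of $\cH\cotensor^{\frac{\cH}{\cH B^+}}\cH\rightrightarrows\cH\to\cH/\cH B^+$ and the equalizer property of $\coinv{\cH}{\frac{\cH}{I}}\to\cH\rightrightarrows\cH\tensor{B}\cH$ hold unconditionally (\Cref{prop:coidealsubalgebra} and \Cref{prop:idealcoideal}), so neither needs the hypotheses you invoke for them; those hypotheses are spent elsewhere.
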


In spite of the technical requirements that appear in the Hopf algebroid setting, the very same theorem rephrased for Hopf algebras over a field reads as follows.

\begin{alphthm}[\Cref{thm:Hopfcase} below]
Let $H$ be a Hopf algebra over the field $\K$.
Then, the following is an inclusion-preserving bijective correspondence:
\[
\xymatrix @R=0pt{
{\left\{ \begin{array}{c} \text{left ideal coideals} \\ \text{in } H \text{ for which} \\ H \cotensor^{\frac{H}{I}} H \rightrightarrows H \to H/I \\ \text{is a coequalizer} \end{array} \right\}} \ar@<+0.5ex>[r]^-{\Psi} & {\left\{ \begin{array}{c} \text{right coideal subalgebras} \\ \text{of } H \text{ such that} \\ B \to H \rightrightarrows H \tensor{B} H \\ \text{is an equalizer}\end{array} \right\}} \ar@<+0.5ex>[l]^-{\Phi} \\
I \ar@{|->}[r] & \coinv{H}{\frac{H}{I}} \\
H B^+ & B. \ar@{|->}[l]
}
\]
\end{alphthm}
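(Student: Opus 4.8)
The plan is to deduce the statement from \Cref{thm:main} by viewing the Hopf algebra $H$ as a left Hopf algebroid over the base ring $A=\K$. Under this identification the source and target maps $s,t\colon \K\to H$ both coincide with the unit of $H$, whence $\op{A}=A=\K$ lands in the centre of $H$. In particular, a ``right $\cH$-comodule $\op{A}$-subring via $t$'' is precisely a right coideal subalgebra of $H$, and the relative tensor products $\cH\tensor{B}\cH$ together with the cotensor product $\cH\cotensor^{\frac{\cH}{I}}\cH$ specialise to the familiar constructions over $\K$. The first thing I would record is that the standing flatness hypothesis of \Cref{thm:main}, namely that $\sM{\cH}={}_{{A\tensor{}\op{1}}}\cH$ be $A$-flat, is automatic here, every module over the field $\K$ being free.

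Next I would eliminate the purity requirements. Since $\K$ is a field every $\K$-vector space is flat, so every $\K$-linear injection is a pure monomorphism; hence the clauses ``$\coinv{\cH}{\frac{\cH}{I}}\subseteq\cH$ is left and right $\cH$-pure as $\op{A}$-bimodule'' and ``$B\subseteq\cH$ is left and right $\cH$-pure as $\op{A}$-bimodule'' hold for any subobject and may be dropped. Combined with the previous remarks, this collapses the two families of \Cref{thm:main} to the left ideal coideals $I$ with $H\cotensor^{\frac{H}{I}}H\rightrightarrows H\to H/I$ a coequalizer and to the right coideal subalgebras $B$ with $B\to H\rightrightarrows H\tensor{B}H$ an equalizer, respectively --- save for one last decoration.

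That remaining decoration, the compatibility of $B$ with the translation map, is the point I expect to be the main obstacle, since it is the only condition whose vanishing is not a formal consequence of working over a field. Here I would unwind the translation map of the Hopf-algebroid structure on $H$ and identify it with the classical map $h\mapsto h_{(1)}\ot S(h_{(2)})$ associated with the antipode $S$. For a right coideal subalgebra one has $\Delta(B)\subseteq B\ot H$, so that for $b\in B$ the first leg $b_{(1)}$ already lies in $B$; I would then check that this is exactly what the compatibility condition requires in this situation, so that every right coideal subalgebra of $H$ is automatically compatible and the clause may be omitted as well.

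With all the Hopf-algebroid decorations trivialised, the two assignments of \Cref{thm:main} reduce to the stated ones: the coinvariants $\coinv{\cH}{\frac{\cH}{I}}$ become the usual coinvariant subalgebra $\coinv{H}{\frac{H}{I}}$, while $\cH B^+$ becomes $H B^+$, with $B^+=B\cap\ker\varepsilon$ the augmentation ideal of $B$. The bijectivity of $\Psi$ and $\Phi$ and the fact that they preserve inclusions are then inherited verbatim from \Cref{thm:main}, completing the proof.
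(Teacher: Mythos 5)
Your proposal is correct and follows essentially the same route as the paper: specialise \Cref{thm:main} to the base ring $A=\K$, observe that the flatness and purity hypotheses are automatic over a field, and note that the translation-map condition \eqref{eq:Bbeta} holds for any right coideal subalgebra because $\gamma(b)=b_{(1)}\otimes S(b_{(2)})$ with $b_{(1)}\in B$, which is exactly the sufficient criterion the paper records after \eqref{eq:Bbeta}. Your unwinding of the translation map is the only point the paper leaves implicit, and your justification of it is the intended one.
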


The latter bijection turns out to extend the already known ones from the literature, which can be seen as restrictions of this new one, as we discuss in \Cref{sec:HopfAlgs}.
Concretely, after collecting the required preliminaries in \Cref{sec:prelim}, we will look again at the correspondence between coideal subrings and left ideal coideals in bialgebroids and we will establish the aforementioned necessary conditions in \Cref{sec:bialgd}. Then, we will prove our main \Cref{thm:main} in \Cref{sec:Hoids} and we will specialise it to the Hopf algebra setting in \Cref{sec:HopfAlgs}. Examples are provided that motivate the importance of these results, even in the classical framework (see \Cref{ex:extending}), and that show how the novel, milder, conditions cannot be avoided as they are not automatically satisfied in general (see \Cref{rem:extending}).


\section{Preliminaries}\label{sec:prelim}

Let us fix a field $\K$, of arbitrary characteristic and not necessarily algebraically closed (unless stated otherwise). All objects, such as algebras and modules over them, are supposed to have an underlying vector space structure over $\K$. The unadorned tensor product $\tensor{}$ stands for $\tensor{\K}$. Furthermore, we assume a certain familiarity of the reader with the language of monoidal categories and of (co)monoids therein (see, for example, \cite[VII]{MacLane}).

Given a (preferably, non-commutative) $\K$-algebra $A$, we denote by $\Ao$ its opposite algebra and by $\Ae$ its enveloping algebra $A \tensor{} \op{A}$. In this case, an element $a \in A$ may be denoted by $\op{a}$ when it is helpful to stress that it is viewed as an element in $\op{A}$.

As a matter of notation, we often denote the unit and multiplication of an algebra or a ring by $u$ and $\mu$, respectively, and the counit and comultiplication of a coalgebra or coring by $\varepsilon$ and $\Delta$, respectively. In addition, we take advantage of the notation explained in \cite[\S2.1]{AryanLaiachiPaoloJoost}: given an $\Ae$-bimodule $M$, the $A$-action by elements of the form $a\tensor{}\op{1}$ is denoted by $\ls{M}$ or $\rs{M}$ and the $\op{A}$-action of the element of the form $1\tensor{}\op{a}$ by $\lt{M}$ or $\rt{M}$, where $s\colon  A \to \Ae, a \mapsto a \ot \op{1}$ and $t\colon \Ao \to \Ae, \op{a} \mapsto 1 \ot \op{a}$. If instead we need to consider actions by the elements $\op{t(a)}$ or $\op{s(a)}$ to produce other $A$ or $\op{A}$ actions, then we use the notation $\op{t}$ and $\op{s}$ on the side on which we are declaring the action. Summing up:
\begin{gather*}
\ls{M}  \coloneqq  {}_{{A\tensor{}\op{1}}}M, 
\quad 
\lt{M}  \coloneqq  {}_{{1\tensor{}\op{A}}}M, 
\quad 
\rt{M}  \coloneqq  {M}_{{1\tensor{}\op{A}}}, 
\quad  
\rs{M}  \coloneqq  {M}_{{A\tensor{}\op{1}}}, 
\\ 
\sMs{M} \coloneqq  {}_{{A\tensor{}\op{1}}}{M}_{{A\tensor{}\op{1}}}, 
\quad 
\sMt{M} \coloneqq  {}_{{A\tensor{}\op{1}}}{M}_{{1\tensor{}\op{A}}}, 
\quad 
\tMs{M} \coloneqq  {}_{{1\tensor{}\op{A}}}{M}_{{A\tensor{}\op{1}}}, 
\quad   
\tMt{M} \coloneqq  {}_{{1\tensor{}\op{A}}}{M}_{{1\tensor{}\op{A}}}, \\
\underset{A\text{-bimodule}}{\sMto{M} 			\coloneqq		{}_{{A\tensor{}\op{A}}}{M}}, 
\quad 
\underset{\op{A}\text{-bimodule}}{\tMso{M} 	\coloneqq 	{}_{{A\tensor{}\op{A}}}{M}}, 
\quad  
\underset{\op{A}\text{-bimodule}}{\soMt{M} 	\coloneqq		{M}_{{A\tensor{}\op{A}}}}, 
\quad
\underset{A\text{-bimodule}}{\toMs{M} 			\coloneqq		{M}_{{A\tensor{}\op{A}}}},
\end{gather*}


\subsection{Cotensor product}

Let $A$ be a $\K$-algebra and $\cC$ be an $A$-coring. If \(M\) is a right \(\cC\)-comodule and \(N\) is a left $\cC$-comodule, then we can define their \emph{cotensor product} to be the equalizer
\[
\xymatrix @C=18pt{
M \cotensor^\cC N  \ar[r] & M \tensor{A} N \ar@<+0.5ex>[rr]^-{\delta_M \tensor{A} N} \ar@<-0.5ex>[rr]_-{M \tensor{A} \delta_N} & & M \tensor{A} \cC \tensor{A} N
}
\]
in the category of vector spaces.

\subsection{Purity}\label{ssec:purity}

Let $R$ be a ring. Recall that a morphism of left (resp. right) $R$-modules $f \colon M \to N$ is called \emph{pure} (or \emph{universally injective}) if the morphism of abelian groups $P \tensor{R} f \colon P \tensor{R} M \to P \tensor{R} N$ (resp. the morphism $f \tensor{R} P \colon M \tensor{R} P \to N \tensor{R} P$) is injective for every right (resp. left) $R$-module $P$. 

As a matter of terminology, given an injective morphism of left (resp. right) $R$-modules $f \colon M \to N$ and a right (resp. left) $R$-module $P$, we will say that $f$ is \emph{$P$-pure} if $P \tensor{R} f \colon P \tensor{R} M \to P \tensor{R} N$ (resp. $f \tensor{R} P \colon M \tensor{R} P \to N \tensor{R} P$) is injective.

In particular, a pure morphism is $P$-pure for every $P$.


\subsection{The Takeuchi-Sweedler crossed product}
Let $A$ be a $\K$-algebra and ${}_{\Ae}M{}_{\Ae}$ and ${}_{\Ae}N{}_{\Ae}$ be two $\Ae$-bimodules. We first define the $A$-bimodule 
\[
M \tensor{A} N \coloneqq  \lt{M} \tensor{A}  \ls{N} = \frac{M \ot N}{\Big\langle t(\op{a})m \ot n - m \ot s(a)n ~\big\vert~ m\in M, n\in N, a\in A \Big\rangle}.
\]
This is the tensor product over $A$ that we consider more often, unless specified otherwise.
Then, inside $M \tensor{A} N$ we consider the subspace
\[
M \tak{A} N \coloneqq \left\{ \left. \sum_i m_i \tensor{A} n_i \in M\tensor{A}N \ \right| \ \sum_i m_it(\op{a}) \tensor{A} n_i = \sum_i m_i \tensor{A} n_is(a)\right\},
\]
which we call the \emph{Takeuchi-Sweedler crossed product}.
If $\cU$ and $\cV$ are two $\Ae$-rings (i.e., $\K$-algebras with a $\K$-algebra map $\Ae \to \cU$), then $\cU \tak{A} \cV$ is also an $\Ae$-ring with multiplication 
\[\left(\sum_iu_i \tensor{A} v_i\right)\cdot \left(\sum_ju'_j \tensor{A} v'_j\right) = \sum_{i,j}u_iu'_j\tensor{A}v_iv'_j\]
for all $u_i,u'_j\in \cU$, $v_i,v'_j\in \cV$ and $\K$-algebra morphism $\Ae \to \cU \tak{A} \cV, a \ot \op{b} \mapsto s_\cU(a) \tensor{A} t_\cV\big(\op{b}\big)$.


\subsection{Left bialgebroids}\label{ssec:n1}

Fix a base algebra $A$. Recall that a \emph{(left) bialgebroid over $A$} is an $\Ae$-ring $\cH$ with commuting source $s\colon A\to \cH$ and target $t\colon \op{A}\to\cH$, together with an $A$-coring structure $\left(\cH,\Delta,\varepsilon\right)$ on the $A$-bimodule ${}_{\Ae}\cH=\sMto{\cH}$, subject to the following compatibility conditions
\begin{enumerate}[leftmargin=1.2cm,label=(B\arabic*),ref=(B\arabic*)]
\item\label{item:B4} $\Delta$ takes values into $\sMto{\cH} \tak{A} \sMto{\cH}$ and $\Delta\colon  \sMto{\cH} \to \sMto{\cH}\tak{A}\sMto{\cH}$ is a morphism of $\K$-algebras;
\item\label{item:B5} $\varepsilon\Big(xs\big(\varepsilon\left(y\right)\big)\Big) = \varepsilon\left(xy\right) = \varepsilon\Big(xt\big(\op{\varepsilon\left(y\right)}\big)\Big)$ for all $x,y\in\cH$;
\item\label{item:B6} $\varepsilon(1_\cH)=1_A$.
\end{enumerate}
Henceforth, all bialgebroids will be left ones and over $A$, so we often omit to specify it.

As a matter of notation, for $\cH$ a bialgebroid and $B\subseteq \cH$ we set
\[
\cH^+\coloneqq \ker(\varepsilon) \qquad \text{and} \qquad B^+\coloneqq B \cap \cH^+.
\]
Denote by $\iota_B\colon B \to \cH$ the inclusion. Assume that $B$ is an $\op{A}$-subring of $\cH$ via $t$, that is, $B$ is an $\op{A}$-ring via a $\K$-algebra extension $t'\colon \op{A}\to B$ such that $\iota_B \circ t'= t$.
If we consider the restriction $\varepsilon' \coloneqq \varepsilon \circ \iota_B$ of $\varepsilon$ to $B$, then the latter is a right $A$-linear morphism and $B^+ = \cH^+ \cap B = \ker(\varepsilon)\cap B = \ker(\varepsilon')$.

Recall that an $\Ao$-subring $B$ of $\cH$ via $t$ is a \emph{right $\cH$-comodule $\Ao$-subring} if it admits a right $A$-linear coaction $\delta \colon  \Mto{B} \to \Mto{B} \tensor{A} \sMto{\cH}$ such that the following diagram commutes
\[
\xymatrix @R=12pt{B \ar[r]^-{\delta} \ar[d]_-{\iota} & B \tensor{A} \cH \ar[d]^-{\iota \tensor{A} \cH} \\ \cH \ar[r]_-{\Delta} & \cH \tensor{A} \cH. }
\]
A \emph{coideal} $N$ in a bialgebroid $\cH$ is an $A$-subbimodule of $\sMto{\cH}$ such that
\[
\Delta(N) \subseteq \img \Big( N \tensor{A} \cH + \cH \tensor{A} N\Big)  \qquad \text{and} \qquad \varepsilon(N)=0_A,
\] 
where $\img(-)$ denotes the canonical image in the tensor product $\sMto{\cH} \tensor{A} \sMto{\cH}$.

\subsection{Left Hopf algebroids}

Let $\cH$ be a left bialgebroid and consider the tensor product
\[
\cH \tensor{\op{A}} \cH = \Mt{\cH} \tensor{\op{A}} \tM{\cH} \coloneqq \frac{\cH \otimes \cH}{\Big\langle xt(\op{a}) \otimes y - x \otimes t(\op{a})y ~\big\vert~ a \in A, x,y \in \cH \Big\rangle}.
\]
Inside $\cH \tensor{\op{A}} \cH$ we isolate the distinguished subspace
\[
\cH \tak{\op{A}} \cH \coloneqq \left\{\left.\sum_ix_i \tensor{\op{A}} y_i ~\right|~ \sum_it(\op{a})x_i \tensor{\op{A}} y_i = \sum_ix_i \tensor{\op{A}} y_it(\op{a})\right\}.
\]
The latter is a $\K$-algebra with unit $1 \tensor{\op{A}} \op{1}$ and multiplication
\[
\left(\sum_ix_i \tensor{\op{A}} y_i\right)\left(\sum_ju_j \tensor{\op{A}} v_j\right) \coloneqq \sum_{i,j}x_iu_j \tensor{\op{A}} v_jy_i.
\]
We say that $\cH$ is a \emph{left Hopf algebroid} in the sense of \cite[Theorem and Definition 3.5]{Schauenburg} if the canonical map
\begin{equation}\label{Eq:betamap}
\beta\colon  \Mt{\cH} \tensor{\op{A}} \tM{\cH} \to \Mto{\cH} \tensor{A} \sM{\cH}, \qquad x \tensor{\op{A}} y \mapsto \sum x_1 \tensor{A} x_2y,
\end{equation}
is invertible. In such a case, the assignment
\[\cH \to \Mt{\cH} \tensor{\op{A}} \tM{\cH}, \qquad x \mapsto \beta^{-1}(x \tensor{A} 1_\cH) \eqqcolon \sum x_{+} \tensor{\op{A}} x_{-}\]
induces a morphism of $\K$-algebras
\[
\gamma\colon \cH \to \cH \tak{\op{A}} \cH, \qquad x \mapsto \beta^{-1}(x \tensor{A} 1_\cH).
\]
The map $\gamma$ is referred to as the \emph{(left) translation map}.


\section{Left ideal coideals and right coideal subrings in bialgebroids}\label{sec:bialgd}

Let us begin by summarizing the content of \cite[\S3.1]{AryanLaiachiPaoloJoost}.
Let $\cH$ be a left bialgebroid.
If $\sM{\cH} = {}_{{A\tensor{}\op{1}}}\cH$ is $A$-flat, then we have well-defined inclusion-preserving assignments
\begin{equation}\label{eq:PhiPsi}
\begin{gathered}
\xymatrix @R=0pt{
{\left\{ \begin{array}{c} \text{left ideals} \\ \text{coideals in } \cH \end{array} \right\}} \ar@<+0.5ex>[r]^-{\Psi} & {\left\{ \begin{array}{c} \text{right } \cH\text{-comodule} \\ \op{A}\text{-subrings via }t \text{ in } \cH \end{array} \right\}} \ar@<+0.5ex>[l]^-{\Phi} \\
I \ar@{|->}[r] & \coinv{\cH}{\frac{\cH}{I}} \\
\cH B^+ & B \ar@{|->}[l]
}
\end{gathered}
\end{equation}
where
\[\coinv{\cH}{\frac{\cH}{I}} = \left\{x \in \cH ~\left|~ \sum (x_1 + I) \tensor{A} x_2 = (1+I) \tensor{A} x \right.\right\}.\]
We also have canonical inclusions $B \subseteq \coinv{\cH}{\frac{\cH}{\cH B^+}}$, which we denote by $\eta_B$, and $\cH B^+ \subseteq I$, that we denote by $\epsilon_I$. Furthermore,
\begin{itemize}[leftmargin=0.7cm]
\item If $I$ is a left ideal coideal in $\cH$ and $B \coloneqq \coinv{\cH}{\frac{\cH}{I}}$, then $B = \coinv{\cH}{\frac{\cH}{\cH B^+}}$, i.e.\ $\Psi\Phi\Psi(I) = \Psi(I)$.
\item If $B$ is a right $\cH$-comodule $\Ao$-subring of $\cH$ via $t$ and $I \coloneqq \cH B^+$, then $I = \cH \left(\coinv{\cH}{\frac{\cH}{I}}\right)^+$. That is, $\Phi\Psi\Phi(B) = \Phi(B)$.
\end{itemize}
In other words, $\Phi$ and $\Psi$ form a \emph{monotone Galois connection} (or, equivalently, an adjunction) between the two lattices. We are interested in determining under which circumstances this Galois-type correspondence induces a bijection.

\begin{proposition}\label{prop:coidealsubalgebra}
Let $B$ be a right $\cH$-comodule $\op{A}$-subring of $\cH$ via $t$ and consider the canonical projection $\pi_B \colon \cH \to \cH/\cH B^+$, which is a morphism of $A$-corings and left $\cH$-modules, and the canonical inclusion $\iota_B \colon B \to \cH$. Then
\begin{enumerate}[leftmargin=0.8cm,label=(\roman*),ref={\itshape(\roman*)}]
\item\label{item:2.1a} The following diagram is commutative, where $\zeta \colon \Mt{\cH} \tensor{\op{A}} {_{t'}B} \to \cH \cotensor^{\frac{\cH}{\cH B^+}} \cH$ is given by $\zeta(x \tensor{\op{A}} b) = \sum x_1 \tensor{A} x_2\iota_B(b)$ for all $x \in \cH$ and $b \in B$, the left-hand square commutes sequentially, and whose rows are coequalizers in the category of vector spaces
\[
\xymatrix @C=20pt @R=15pt{
\cH \tensor{\op{A}} B \ar@<+0.5ex>[rr]^-{\cH \tensor{\op{A}} \varepsilon'} \ar@<-0.5ex>[rr]_-{\mu \circ (\cH \tensor{\op{A}} \iota_B)} \ar[d]_-{\zeta} && \cH \ar[r]^-{\pi_B} \ar@{=}[d] & {\displaystyle \frac{\cH}{\cH B^+} } \ar@{=}[d] \\
\cH \cotensor^{\frac{\cH}{\cH B^+}} \cH \ar@<+0.5ex>[rr]^-{\cH \tensor{A} \varepsilon} \ar@<-0.5ex>[rr]_-{\varepsilon \tensor{A} \cH} && \cH \ar[r]_-{\pi_B} & {\displaystyle \frac{\cH}{\cH B^+} }.
}
\]
\item\label{item:2.1b} The following diagram commutes sequentially, where $\xi \colon \cH_{\iota} \tensor{B} {_{\iota}{\cH}} \to \Mto{\cH/\cH B^+} \tensor{A} \sM{\cH}$ is given by $\xi(x \tensor{B} y) = \sum \pi_B(x_1) \tensor{A} x_2y$ for all $x,y \in \cH$ and whose second row is an equalizer in the category of vector spaces
\begin{equation}\label{eq:PsiPhiB}
\begin{gathered}
\xymatrix @C=25pt @R=18pt{
 & \cH \ar@<+0.5ex>[rr]^-{\cH \tensor{B} \iota_B} \ar@<-0.5ex>[rr]_-{\iota_B \tensor{B} \cH} \ar@{=}[d] && \cH \tensor{B} \cH \ar[d]^-{\xi} \\
\coinv{\cH}{\frac{\cH}{\cH B^+}} \ar[r] & \cH \ar@<+0.5ex>[rr]^-{(\pi_B \tensor{A} \cH) \Delta} \ar@<-0.5ex>[rr]_-{\pi_B t \, \tensor{A} \cH} && {\displaystyle \frac{\cH}{\cH B^+} } \tensor{A} \cH.
}
\end{gathered}
\end{equation}
\end{enumerate}
\end{proposition}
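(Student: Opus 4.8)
The plan is to establish the two items in parallel, since both rest on the same two ingredients: the counit axioms of the bialgebroid and the comodule-subring compatibility $\Delta\circ\iota_B = (\iota_B\tensor{A}\cH)\circ\delta$, which in Sweedler notation guarantees that $\Delta(\iota_B(b))\in\iota_B(B)\tensor{A}\cH$ for every $b\in B$. Throughout, one must keep careful track of which of the $s$- and $t$-actions is in play on each tensor factor, since $\tensor{A}$ and $\tensor{\op{A}}$ balance these actions differently; this bookkeeping, rather than any single conceptual step, is where the care lies.

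For item \ref{item:2.1a}, I would first record that $\cH B^+$ is a left ideal coideal, by the summary of \cite[\S3.1]{AryanLaiachiPaoloJoost} recalled above, so that $\pi_B$ is indeed a morphism of $A$-corings and of left $\cH$-modules. Next I would show that the top row is a coequalizer by computing the image of the difference of its two parallel arrows. Since $\varepsilon'$ is surjective with kernel $B^+$, one has the vector-space splitting $B = B^+\oplus t(\op{A})$, and $\iota_B(b) - t(\op{\varepsilon'(b)})\in B^+$ for every $b$; as $\cH\tensor{\op{A}}\varepsilon'$ sends $x\tensor{\op{A}}b$ to $x\,t(\op{\varepsilon'(b)})$ under $\cH\tensor{\op{A}}\op{A}\cong\cH$, the image of $\mu\circ(\cH\tensor{\op{A}}\iota_B) - (\cH\tensor{\op{A}}\varepsilon')$ is exactly the left ideal $\cH B^+$, whence the coequalizer is $\pi_B$. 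I would then check that $\zeta$ is well defined --- balanced over $\op{A}$ and landing inside $\cH\cotensor^{\frac{\cH}{\cH B^+}}\cH$ --- by verifying the defining equalizer condition of the cotensor product with the help of the compatibility above, and that the left-hand square commutes sequentially by two short counit computations, matching $(\cH\tensor{A}\varepsilon)\circ\zeta$ with $\cH\tensor{\op{A}}\varepsilon'$ and $(\varepsilon\tensor{A}\cH)\circ\zeta$ with $\mu\circ(\cH\tensor{\op{A}}\iota_B)$.

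The decisive point of \ref{item:2.1a} is that the bottom row is again a coequalizer, and here I would argue categorically rather than by describing $\cH\cotensor^{\frac{\cH}{\cH B^+}}\cH$ explicitly. Writing $f,g$ for the two bottom arrows, the image of $f-g$ contains $\cH B^+$: indeed, precomposing with $\zeta$ recovers the top difference (sequential commutativity), whose image is $\cH B^+$. For the reverse inclusion it suffices that $\pi_B$ coequalizes the pair, i.e. that $\pi_B\circ(\cH\tensor{A}\varepsilon)$ and $\pi_B\circ(\varepsilon\tensor{A}\cH)$ agree on the cotensor product; this I would obtain from the defining equalizer identity by collapsing its outer tensor legs with the counit and using that the induced counit $\bar\varepsilon$ on $\cH/\cH B^+$ satisfies $\bar\varepsilon\circ\pi_B = \varepsilon$. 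The two inclusions give $\img(f-g)=\cH B^+$, so the coequalizer of the bottom pair is $\pi_B\colon\cH\to\cH/\cH B^+$, the right-hand verticals being identities. (Notably, surjectivity of $\zeta$ is not needed for this conclusion.)

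For item \ref{item:2.1b}, the second row is an equalizer essentially by construction: its two parallel arrows send $x$ to $\sum\pi_B(x_1)\tensor{A}x_2$ and to $(1+\cH B^+)\tensor{A}x$, so their equalizer is precisely $\coinv{\cH}{\frac{\cH}{\cH B^+}}$ as displayed in \eqref{eq:PhiPsi}. It then remains to check that $\xi$ is well defined over $B$ --- once more a Sweedler computation invoking $\Delta(\iota_B(b))\in\iota_B(B)\tensor{A}\cH$, the left $\cH$-linearity of $\pi_B$, and the counit axioms to reabsorb the $B$-leg --- and that the diagram commutes sequentially, which is immediate since $\xi(x\tensor{B}1)=\sum\pi_B(x_1)\tensor{A}x_2$ and $\xi(1\tensor{B}y)=\pi_B(1)\tensor{A}y$ reproduce the two bottom arrows. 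I expect the only genuine obstacles in the whole statement to be the well-definedness of $\zeta$ and $\xi$ together with the coequalizing property of $\pi_B$ above, all three of which hinge on correctly matching the $A$- and $\op{A}$-module structures across the various tensor products while applying the comodule-subring compatibility.
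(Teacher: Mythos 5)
Your proposal is correct and follows essentially the same route as the paper: the same counit-collapse of the cotensor identity shows that $\pi_B$ coequalizes the bottom pair in \ref{item:2.1a}, the same elements $\zeta(x \tensor{\op{A}} b)$ with $b \in B^+$ show that nothing smaller than $\cH B^+$ can work, and the second row of \ref{item:2.1b} is an equalizer by the very definition of the coinvariants. The only cosmetic differences are that you phrase the coequalizer claims as $\img(f-g)=\cH B^+$ rather than verifying the universal property directly, and that you check the $B$-balancedness of $\xi$ by hand instead of inducing $\xi$ from $\beta$ via the coequalizer presentation of $\cH \tensor{B} \cH$ as in the paper's diagram \eqref{eq:Bxi}.
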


\begin{proof}
Let us begin by proving \ref{item:2.1a}. A straightforward check shall convince the reader that the first line is a coequalizer diagram. Concerning the second line, if $\sum_i x_i \tensor{A} y_i \in \cH \cotensor^{\frac{\cH}{\cH B^+}} \cH$, then 
\[\sum_i {x_i}_1 \tensor{A} \pi_B\big({x_i}_2\big) \tensor{A} y_i = \sum_i {x_i} \tensor{A} \pi_B\big({y_i}_1\big) \tensor{A} {y_i}_2\]
and hence, by applying $\varepsilon \tensor{A} \cH/\cH B^+ \tensor{A} \varepsilon$ to both sides,
\[\pi_B\big(\sum_i {x_i} \varepsilon(y_i)\big) = \pi_B\big(\sum_i \varepsilon({x_i}) {y_i}\big).\]
Furthermore, for every $x \in \cH$ and $b \in B$ we have that $\sum x_1 \tensor{A} x_2 \iota_B(b) \in \cH \cotensor^{\frac{\cH}{\cH B^+}} \cH$ because
\begin{align*}
\sum x_1 \tensor{A} \pi_B\big(x_2 \iota_B(b)_1\big) \tensor{A} x_3 \iota_B(b)_2 & = \sum x_1 \tensor{A} \pi_B\big(x_2 \iota_B(b_0)\big) \tensor{A} x_3 b_1 \\
& = \sum x_1 \tensor{A} \pi_B(x_2) \varepsilon\iota_B(b_0) \tensor{A} x_3 b_1 \\
& = \sum x_1 \tensor{A} \pi_B(x_2) \tensor{A} x_3 \iota_B(b).
\end{align*}
Therefore, $\zeta$ lands indeed in the cotensor product and if $f \colon \cH \to V$ is a linear map such that $f \circ (\cH \tensor{A} \varepsilon) = f \circ (\varepsilon \tensor{A} \cH)$ then for all $x \in \cH$ and $b \in B^+$ we have
\begin{align*}
f(x\iota_B(b)) & = \big(f \circ (\varepsilon \tensor{A} \cH)\big)\big(\sum x_1 \tensor{A} x_2 \iota_B(b)\big) = \big(f \circ (\cH \tensor{A} \varepsilon)\big)\big(\sum x_1 \tensor{A} x_2 \iota_B(b)\big) \\
& = f\big(\sum x_1 \varepsilon\big(x_2 s\varepsilon\iota_B(b)\big)\big) = 0
\end{align*}
and so $f$ factors uniquely through $\cH/\cH B^+$, showing that it is indeed the coequalizer. To finish the check that $\zeta$ is well-defined, notice that it is simply the corestriction of the well-defined composition $\beta \circ (\cH \tensor{\op{A}} \iota_B)$, where $\beta$ is the canonical map \eqref{Eq:betamap} of $\cH$. The commutativity of the diagram can be checked directly.

Concerning \ref{item:2.1b}, the existence of $\xi$ induced by $\beta$ has been already shown in \cite[Lemma 3.12]{AryanLaiachiPaoloJoost}. The key observation is that the following diagram is commutative
\begin{equation}\label{eq:Bxi}
\begin{gathered}
\xymatrix @C=40pt {
\Mt{\cH} \tensor{\op{A}} {_{t'}B_{t'}} \tensor{\op{A}} \tM{\cH} \ar[d]_-{\theta} \ar@<+0.5ex>[rr]^-{\mu(\cH \tensor{\op{A}}\iota_B) \tensor{\op{A}} \cH} \ar@<-0.5ex>[rr]_-{\cH \tensor{\op{A}} \mu(\iota_B \tensor{\op{A}} \cH)} && \Mt{\cH} \tensor{\op{A}} \tM{\cH} \ar[d]^-{\beta} \ar[r]^-{p} & \cH_\iota \tensor{B} {_\iota\cH} \ar@{.>}[d]^-{\xi} \\
\Mto{\big(\Mt{\cH} \tensor{\op{A}} {_{t'}B}\big)} \tensor{A} \sM{\cH} \ar@<+0.5ex>[rr]^-{\mu(\cH \tensor{\op{A}}\iota_B) \tensor{A} \cH} \ar@<-0.5ex>[rr]_-{\cH \tensor{\op{A}} \varepsilon' \tensor{A} \cH} && \Mto{\cH} \tensor{A} \sM{\cH} \ar[r]_-{\pi_B \tensor{A} \cH} & {\displaystyle \Mto{\frac{\cH}{\cH B^+}} \tensor{A} \sM{\cH}}
}
\end{gathered}
\end{equation}
where
\[
\theta \colon \cH \tensor{\op{A}} B \tensor{\op{A}} \cH \to \cH \tensor{\op{A}} B \tensor{A} \cH, \qquad x \tensor{\op{A}} b \tensor{\op{A}} y \mapsto \sum x_1 \tensor{\op{A}} b_0 \tensor{A} x_2b_1y,
\]
and the rows are coequalizers.
The second row is an equalizer by definition of the space of coinvariant elements. The commutativity of the diagram can be checked directly.
\end{proof}

\begin{proposition}\label{prop:idealcoideal}
Let $I$ be a left ideal coideal in $\cH$ and consider the space of coinvariant elements $\coinv{\cH}{\frac{\cH}{I}}$, which is a right $\cH$-comodule $\op{A}$-subring of $\cH$ via $t$ with canonical inclusion $\iota_I \colon \coinv{\cH}{\frac{\cH}{I}} \to \cH$, and the canonical projection $\pi_I \colon \cH \to \cH/I$. For the sake of clarity, we often denote by $\tensor{\coinv{\cH}{}}$ the tensor product over $\coinv{\cH}{\frac{\cH}{I}}$, when there is no risk of confusion. Then:
\begin{enumerate}[leftmargin=0.8cm,label=(\roman*),ref={\itshape(\roman*)}]
\item\label{item:2.2a} The following diagram is commutative, where $\xi \colon \cH \tensor{\coinv{\cH}{\frac{\cH}{I}}} \cH \to \cH/I \tensor{A} \cH$ is given by $\xi(x \tensor{\coinv{\cH}{}} y) = \sum \pi_I(x_1) \tensor{A} x_2y$ for all $x,y \in \cH$, the right-hand side square commutes sequentially, and whose rows are equalizers in the category of vector spaces
\[
\xymatrix @C=22pt {
\coinv{\cH}{\frac{\cH}{I}} \ar[r] \ar@{=}[d] & \cH \ar@{=}[d] \ar@<+0.5ex>[rr]^-{\cH \tensor{\coinv{\cH}{}} \iota_I} \ar@<-0.5ex>[rr]_-{\iota_I \tensor{\coinv{\cH}{}} \cH} && \cH \tensor{\coinv{\cH}{\frac{\cH}{I}}} \cH \ar[d]^-{\xi} \\
\coinv{\cH}{\frac{\cH}{I}} \ar[r] & \cH \ar@<+0.5ex>[rr]^-{(\pi_I \tensor{A} \cH) \circ \Delta} \ar@<-0.5ex>[rr]_-{\pi_I t \, \tensor{A} \cH} && {\displaystyle \frac{\cH}{I} \tensor{A} \cH}.
}
\]
\item\label{item:2.2b} The following diagram commutes sequentially, where $\zeta \colon \Mt{\cH} \tensor{\op{A}} \tM{\coinv{\cH}{\frac{\cH}{I}}} \to \cH \cotensor^{\frac{\cH}{I}} \cH$ is given by $\zeta(x \tensor{\op{A}} b) = \sum x_1 \tensor{A} x_2\iota_I(b)$ for all $x \in \cH$ and $b \in \coinv{\cH}{\frac{\cH}{I}}$, and whose first row is a coequalizer in the category of vector spaces
\begin{equation}\label{eq:PhiPsiI}
\begin{gathered}
\xymatrix @C=20pt {
\cH \tensor{\op{A}} \coinv{\cH}{\frac{\cH}{I}} \ar[d]_-{\zeta} \ar@<+0.5ex>[rr]^-{\mu} \ar@<-0.5ex>[rr]_-{\cH \tensor{\op{A}} \varepsilon} && \cH \ar@{=}[d] \ar[r] & {\displaystyle \frac{\cH}{\cH\big(\coinv{\cH}{\frac{\cH}{I}}\big)^+}} \\
\cH \cotensor^{\frac{\cH}{I}} \cH \ar@<+0.5ex>[rr]^-{\varepsilon \tensor{A} \cH} \ar@<-0.5ex>[rr]_-{\cH \tensor{A} \varepsilon} && \cH &
}
\end{gathered}
\end{equation}
\end{enumerate}
\end{proposition}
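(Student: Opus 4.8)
The plan is to derive both parts of \Cref{prop:idealcoideal} from \Cref{prop:coidealsubalgebra} applied to the right $\cH$-comodule $\op{A}$-subring $B\coloneqq\coinv{\cH}{\frac{\cH}{I}}$, the single genuinely new point being the assertion that the \emph{top} row in \ref{item:2.2a} is an equalizer. The mechanism for the reduction is the inclusion $\cH B^+=\cH\big(\coinv{\cH}{\frac{\cH}{I}}\big)^+\subseteq I$, which provides a surjection of $A$-corings $p\colon\cH/\cH B^+\to\cH/I$ with $\pi_I=p\circ\pi_B$; I will use $p$ systematically to transport the conclusions of \Cref{prop:coidealsubalgebra} from the quotient $\cH/\cH B^+$ to the quotient $\cH/I$.

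For \ref{item:2.2a}, the bottom row is an equalizer by the very definition of $\coinv{\cH}{\frac{\cH}{I}}$, which is precisely the equalizer of $(\pi_I\tensor{A}\cH)\circ\Delta$ and of the map $x\mapsto\pi_I(1_\cH)\tensor{A}x$ denoted $\pi_I t\tensor{A}\cH$. The map $\xi$ is the composite of the map $\xi'\colon\cH\tensor{\coinv{\cH}{}}\cH\to\cH/\cH B^+\tensor{A}\cH$ furnished by \Cref{prop:coidealsubalgebra}\,\ref{item:2.1b} with $p\tensor{A}\cH$; since $\pi_I=p\circ\pi_B$ this has the stated formula, and the sequential commutativity of the right-hand square follows by postcomposing the corresponding square of \eqref{eq:PsiPhiB} with $p\tensor{A}\cH$. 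It then remains to show that the top row is an equalizer, and here the fact that the left-hand object is the \emph{full} space of coinvariants becomes essential. Denote the two top maps by $a,b$ and the two bottom maps by $c,d$. The commutativity of the square yields $\ker(a-b)\subseteq\ker(c-d)=\coinv{\cH}{\frac{\cH}{I}}$, whereas the reverse inclusion is immediate because every $x\in\coinv{\cH}{\frac{\cH}{I}}=B$ satisfies $x\tensor{\coinv{\cH}{}}1_\cH=1_\cH\tensor{\coinv{\cH}{}}x$ (it lies in the subring over which the tensor product is formed), that is $a(x)=b(x)$. Hence the equalizer of $a$ and $b$ is exactly $\coinv{\cH}{\frac{\cH}{I}}$, with structural morphism $\iota_I$; note that no injectivity of $\xi$ is needed for this.

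For \ref{item:2.2b}, I apply \Cref{prop:coidealsubalgebra}\,\ref{item:2.1a} to the same $B$. Its first row is identical to the first row of \ref{item:2.2b}, so the latter is a coequalizer with quotient $\cH/\cH B^+=\cH/\cH\big(\coinv{\cH}{\frac{\cH}{I}}\big)^+$, the image of the difference of $\mu$ and $\cH\tensor{\op{A}}\varepsilon$ being $\cH B^+$. The map $\zeta$ is the corestriction of $\beta\circ(\cH\tensor{\op{A}}\iota_I)$, which by \ref{item:2.1a} lands in $\cH\cotensor^{\cH/\cH B^+}\cH$; since applying $p$ to the middle tensor factor carries the defining condition of $\cH\cotensor^{\cH/\cH B^+}\cH$ into that of $\cH\cotensor^{\cH/I}\cH$, one has $\cH\cotensor^{\cH/\cH B^+}\cH\subseteq\cH\cotensor^{\cH/I}\cH$ as subspaces of $\cH\tensor{A}\cH$, and $\zeta$ is well defined with values in the larger cotensor product. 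Finally, the two maps $\varepsilon\tensor{A}\cH$ and $\cH\tensor{A}\varepsilon$ out of the cotensor product are unchanged by this enlargement, so the sequential commutativity of the square is precisely the one already verified in \Cref{prop:coidealsubalgebra}\,\ref{item:2.1a} (using the counit axioms and \ref{item:B5}).

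I expect the main obstacle to be the top-row equalizer in \ref{item:2.2a}: this is the only step that does not simply transcribe \Cref{prop:coidealsubalgebra}, and it is exactly where the saturation of $\coinv{\cH}{\frac{\cH}{I}}$, as opposed to that of a general coideal subring, enters. Everything else is either the definition of the coinvariants, an application of \Cref{prop:coidealsubalgebra} to $B=\coinv{\cH}{\frac{\cH}{I}}$, or the routine bookkeeping of passing along $p$ from $\cH/\cH B^+$ to $\cH/I$.
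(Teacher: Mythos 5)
Your proof is correct, and on the one genuinely substantive point---that the top row of \Cref{prop:idealcoideal}\ref{item:2.2a} is an equalizer---it coincides with the paper's argument: the paper applies $\big((\pi_I \tensor{A} \cH)\circ\Delta\big)\tensor{\coinv{\cH}{}}\cH$ followed by $\cH/I\tensor{A}\mu$, and this composite is exactly your $\xi$, so chasing the commutative square and then invoking the balancing relation $x\tensor{\coinv{\cH}{}}1=1\tensor{\coinv{\cH}{}}x$ for $x\in\coinv{\cH}{\frac{\cH}{I}}$ is the same computation in different words. Where you differ is in the bookkeeping for the routine verifications: writing $B\coloneqq\coinv{\cH}{\frac{\cH}{I}}$, you systematically transport \Cref{prop:coidealsubalgebra} along the coring surjection $p\colon\cH/\cH B^+\to\cH/I$ induced by $\cH B^+\subseteq I$, obtaining the well-definedness of $\xi$ as $(p\tensor{A}\cH)\circ\xi'$ and that of $\zeta$ from the inclusion $\cH\cotensor^{\cH/\cH B^+}\cH\subseteq\cH\cotensor^{\cH/I}\cH$ of subspaces of $\cH\tensor{A}\cH$, and reading off the coequalizer in \ref{item:2.2b} as literally the first row of \Cref{prop:coidealsubalgebra}\ref{item:2.1a}. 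The paper instead re-verifies these facts directly: it proves the right $\coinv{\cH}{\frac{\cH}{I}}$-linearity of $(\pi_I\tensor{A}\cH)\circ\beta$ to get $\xi$, and uses the auxiliary commutative diagram \eqref{eq:Izeta} to corestrict $\beta\circ(\cH\tensor{\op{A}}\iota_I)$ to the cotensor product. Your reduction is slightly more economical and makes the logical dependence on \Cref{prop:coidealsubalgebra} explicit; the paper's direct checks avoid having to observe that the two cotensor products are nested. Both routes are complete, and I see no gap in yours.
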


\begin{proof}
The fact that the second line in \ref{item:2.2a} is an equalizer follows from the definition of the space of coinvariant elements. To show that the first line is an equalizer as well let us proceed as follows. First of all, notice that if $b \in \coinv{\cH}{\frac{\cH}{I}}$ and $x \in \cH$ then
\[(\pi_I \tensor{A} \cH)\big(\Delta(xb)\big) = \sum \pi_I(x_1 b_1) \tensor{A} x_2b_2 = x\cdot \left(\sum \pi_I(b_1) \tensor{A} b_2\right) = \sum \pi_I(x_1) \tensor{A} x_2b,\]
where $\cdot$ is simply the diagonal action of $\cH$ on the tensor product of two $\cH$-modules, so that $(\pi_I \tensor{A} \cH)\circ \Delta$ is right $\coinv{\cH}{\frac{\cH}{I}}$-linear. Thus, if $x \in \cH$ satisfies $1 \tensor{\coinv{\cH}{}} x = x \tensor{\coinv{\cH}{}} 1$, then we may apply first $\big((\pi_I \tensor{A} \cH)\circ \Delta\big) \tensor{\coinv{\cH}{}} \cH$ and then $\cH/I \tensor{A} \mu$ to both sides to conclude that
\[\sum \pi_I(x_1) \tensor{A} x_2 = \pi_I(1) \tensor{A} x,\]
that is to say, $x \in \coinv{\cH}{\frac{\cH}{I}}$ and so $\coinv{\cH}{\frac{\cH}{I}}$ is indeed the equalizer of $\iota_I \tensor{\coinv{\cH}{}} \cH$ and $\cH \tensor{\coinv{\cH}{}} \iota_I$. We are left to prove that $\xi$ is well-defined, because the commutativity of the diagram is again a straightforward check. For all $x,y \in \cH$ and $b \in \coinv{\cH}{\frac{\cH}{I}}$ we have that
\[\sum \pi_I(x_1b_1) \tensor{A} x_2b_2y = x \cdot \left(\sum \pi_I(b_1) \tensor{A} b_2\right) y = \sum \pi_I(x_1) \tensor{A} x_2by,\]
where $\cdot$ is again the diagonal action of $\cH$ and the juxtaposition on the right is the regular right $\cH$-action, so that $(\pi_I \tensor{A} \cH) \circ \beta$ factors through the tensor product over $\coinv{\cH}{\frac{\cH}{I}}$, as desired, where $\beta$ is the canonical map \eqref{Eq:betamap} of $\cH$.

Concerning \ref{item:2.2b}, the fact that the first row is an equalizer follows, as before, from a direct check, and the fact that the diagram is commutative, too. We are left to prove that the morphism $\beta \circ (\cH \tensor{\op{A}} \iota_I)$ corestricts to the cotensor product, in order to conclude that $\zeta$ is well-defined. This descends from the commutativity of the following diagram
\begin{equation}\label{eq:Izeta}
\begin{gathered}
\xymatrix @C=35pt {
\cH \tensor{\op{A}} \coinv{\cH}{\frac{\cH}{I}} \ar@{.>}[d]_-{\zeta} \ar[r] & \cH \tensor{\op{A}} \cH \ar[d]_-{\beta} \ar@<+0.5ex>[rr]^-{\cH \tensor{\op{A}} \pi_I t \tensor{A} \cH} \ar@<-0.5ex>[rr]_-{\cH \tensor{\op{A}} (\pi_I \tensor{A} \cH) \Delta} && {\displaystyle \cH \tensor{\op{A}} \left(\frac{\cH}{I} \tensor{A} \cH\right)} \ar[d]^-{\beta \tensor{\cH} ({\cH}/{I} \tensor{A} \cH)} \\
\cH \cotensor^{\frac{\cH}{I}} \cH \ar[r] & \cH \tensor{A} \cH \ar@<+0.5ex>[rr]^-{(\cH \tensor{A} \pi_I) \Delta \tensor{A} \cH} \ar@<-0.5ex>[rr]_-{\cH \tensor{A} (\pi_I \tensor{A} \cH) \Delta} && {\displaystyle \cH \tensor{A} \left(\frac{\cH}{I} \tensor{A} \cH\right)}
} 
\end{gathered}
\end{equation}
whose second line is an equalizer in the category of vector spaces.
\end{proof}


\section{Left ideal coideals and right coideal subrings in Hopf algebroids}\label{sec:Hoids}

Let $\cH$ be a left Hopf algebroid such that $\sM{\cH} = {}_{{A\tensor{}\op{1}}}\cH$ is $A$-flat. Let $B$ be a right $\cH$-comodule $\op{A}$-subring of $\cH$ via $t$ for which 
\begin{equation}\label{eq:Bbeta}
(p \circ \beta^{-1})(\cH B^+ \tensor{A} \cH) = 0,
\end{equation}
where $p\colon \cH \tensor{\op{A}} \cH \to \cH \tensor{B} \cH$ is the canonical projection form \eqref{eq:Bxi}. For instance, this happens when there exists a morphism $\gamma'\colon B \to B \tensor{\op{A}} \cH$ such that $(\iota_B \tensor{\op{A}} \cH) \circ \gamma' = \gamma \circ \iota_B$.
Then, by the commutativity of \eqref{eq:Bxi}, the invertibility of the canonical map \eqref{Eq:betamap}
\[\beta\colon \Mt{\cH} \tensor{\op{A}} \tM{\cH} \to \Mto{\cH} \tensor{A} \sM{\cH}, \qquad x \tensor{\op{A}} y \mapsto \sum x_1 \tensor{A} x_2y,\]
entails that the morphism 
\[
\xi\colon {\cH_{\iota}} \tensor{B} {_\iota\cH} \to \Mto{\frac{\cH}{\cH B^+}} \tensor{A} \sM{\cH},
\]
from \Cref{prop:coidealsubalgebra}\ref{item:2.1b} is an isomorphism (see also \cite[Lemma 3.12]{AryanLaiachiPaoloJoost}). In this setting, we have the following remarkable result.

\begin{proposition}\label{prop:PsiPhiB}
Let $\cH$ be a left Hopf algebroid such that $\sM{\cH} = {}_{{A\tensor{}\op{1}}}\cH$ is $A$-flat. Let $B$ be a right $\cH$-comodule $\op{A}$-subring of $\cH$ via $t$ for which \eqref{eq:Bbeta} holds and such that $B \to \cH \rightrightarrows \cH \tensor{B} \cH$ is an equalizer. Then $B = \coinv{\cH}{\frac{\cH}{\cH B^+}}$.
\end{proposition}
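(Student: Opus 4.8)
The plan is to read the identity directly off diagram \eqref{eq:PsiPhiB} of \Cref{prop:coidealsubalgebra}\ref{item:2.1b}, now that hypothesis \eqref{eq:Bbeta} has upgraded $\xi$ to an isomorphism. Recall that, as observed just before the statement, the invertibility of the canonical map $\beta$ together with the commutativity of \eqref{eq:Bxi} forces
\[
\xi \colon \cH_\iota \tensor{B} {}_\iota\cH \to \Mto{\frac{\cH}{\cH B^+}} \tensor{A} \sM{\cH}, \qquad \xi(x \tensor{B} y) = \sum \pi_B(x_1) \tensor{A} x_2 y,
\]
to be an isomorphism; in particular $\xi$ is injective. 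This injectivity is the crux: the inclusion $B \subseteq \coinv{\cH}{\frac{\cH}{\cH B^+}}$ is already supplied by the unit $\eta_B$ of the Galois connection \eqref{eq:PhiPsi}, so only the reverse inclusion has to be produced.

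For the comparison I would first record the two equalizer descriptions. By the standing hypothesis, the top parallel pair of \eqref{eq:PsiPhiB}, namely $x \mapsto x \tensor{B} 1_\cH$ and $x \mapsto 1_\cH \tensor{B} x$, has equalizer exactly $B$ inside $\cH$. By the very definition of the coinvariants, the bottom parallel pair has equalizer $\coinv{\cH}{\frac{\cH}{\cH B^+}} = \left\{x \in \cH \mid \sum \pi_B(x_1) \tensor{A} x_2 = \pi_B(1_\cH) \tensor{A} x\right\}$. Evaluating the formula for $\xi$ on the two generators gives
\[
\xi(x \tensor{B} 1_\cH) = \sum \pi_B(x_1) \tensor{A} x_2 \qquad \text{and} \qquad \xi(1_\cH \tensor{B} x) = \pi_B(1_\cH) \tensor{A} x,
\]
which is precisely the sequential commutativity of \eqref{eq:PsiPhiB} read through the identity map $\id_\cH$ on the left.

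The key step is then a pointwise comparison of the two equalizers inside $\cH$. Fix $x \in \cH$. Since $\xi$ is injective, the equality $x \tensor{B} 1_\cH = 1_\cH \tensor{B} x$ in $\cH \tensor{B} \cH$ holds if and only if $\xi(x \tensor{B} 1_\cH) = \xi(1_\cH \tensor{B} x)$, and by the displayed formulas the latter is exactly the coinvariance condition $\sum \pi_B(x_1) \tensor{A} x_2 = \pi_B(1_\cH) \tensor{A} x$. Therefore $x \in B$ if and only if $x \in \coinv{\cH}{\frac{\cH}{\cH B^+}}$, so the two subspaces of $\cH$ coincide and the claimed identity follows.

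The main obstacle has in fact already been cleared before the statement: it is the upgrade of $\xi$ from a mere morphism to an isomorphism, which rests on the Hopf condition (invertibility of $\beta$) and on hypothesis \eqref{eq:Bbeta} through the commutativity of \eqref{eq:Bxi}. Granting that, what remains is the purely formal transport of the equalizer characterisation along the monomorphism $\xi$, which requires no new input; the $A$-flatness of $\sM{\cH}$ has already been spent in setting up \eqref{eq:PsiPhiB} and in guaranteeing that $\xi$ is an isomorphism. Hence $B = \coinv{\cH}{\frac{\cH}{\cH B^+}}$.
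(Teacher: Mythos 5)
Your argument is correct and is essentially the paper's own proof of \Cref{prop:PsiPhiB}: the paper likewise reads the identity off diagram \eqref{eq:PsiPhiB}, using that $\xi$ is an isomorphism and that the top row, completed by $B$, is an equalizer by hypothesis while the bottom row is the equalizer defining the coinvariants. Your pointwise comparison (using only the injectivity of $\xi$) is just a more explicit rendering of the same inspection.
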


\begin{proof}
It follows from inspecting \eqref{eq:PsiPhiB}, where now $\xi$ is an isomorphism and the first row can be completed to an equalizer diagram by adding $B$.
\end{proof}

In addition, we have the following counterpart.

\begin{proposition}\label{prop:PhiPsiI}
Let $\cH$ be a left Hopf algebroid such that $\sM{\cH} = {}_{{A\tensor{}\op{1}}}\cH$ is $A$-flat. Let $I$ be a left ideal coideal in $\cH$ for which $\cH \tensor{\op{A}} \coinv{\cH}{\frac{\cH}{I}}$ is still the equalizer of $\cH \tensor{\op{A}} \pi_I t \tensor{A} \cH$ and $\cH \tensor{\op{A}} (\pi_I \tensor{A} \cH) \circ \Delta$. 
Then, $\zeta \colon \cH \tensor{\op{A}} \coinv{\cH}{\frac{\cH}{I}} \to \cH \cotensor^{\frac{\cH}{I}} \cH$ from \Cref{prop:idealcoideal}\ref{item:2.2b} is an isomorphism. If, moreover, $\cH \cotensor^{\frac{\cH}{I}} \cH \rightrightarrows \cH \to \cH/I$ is a coequalizer, then $I = \cH \big(\coinv{\cH}{\frac{\cH}{I}}\big)^+$.
\end{proposition}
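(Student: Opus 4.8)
The plan is to deduce both assertions from the two commutative diagrams recorded in \Cref{prop:idealcoideal}\ref{item:2.2b}, namely \eqref{eq:Izeta} and \eqref{eq:PhiPsiI}, by invoking the comparison principle for (co)equalizers: a morphism between two equalizer (resp.\ coequalizer) diagrams whose two relevant vertical components are isomorphisms induces an isomorphism on the equalizers (resp.\ coequalizers). No fresh computation is needed, since all the squares involved were already shown to commute and all the rows that are (co)equalizers independently of the present hypotheses were already identified in \Cref{prop:idealcoideal}.

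For the first assertion I would look at diagram \eqref{eq:Izeta}. Its bottom row is, by construction, the defining equalizer of the cotensor product $\cH \cotensor^{\frac{\cH}{I}} \cH$, while its top row is an equalizer with object $\cH \tensor{\op{A}} \coinv{\cH}{\frac{\cH}{I}}$ precisely by the standing hypothesis on $I$. The middle vertical arrow is the canonical map $\beta$ of \eqref{Eq:betamap}, which is bijective because $\cH$ is a left Hopf algebroid. The key point is that the right-hand vertical arrow is $\beta \tensor{\cH}\big(\frac{\cH}{I}\tensor{A}\cH\big)$: viewing $\beta \colon \Mt{\cH}\tensor{\op{A}}\tM{\cH} \to \Mto{\cH}\tensor{A}\sM{\cH}$ as a morphism of right $\cH$-modules through the rightmost tensor factor and applying the functor $-\tensor{\cH}\big(\frac{\cH}{I}\tensor{A}\cH\big)$, one recovers exactly this arrow, which is therefore an isomorphism as well. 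Since \eqref{eq:Izeta} commutes and both vertical arrows flanking the two equalizer rows are isomorphisms, the comparison principle yields that the induced map on equalizers, which is precisely $\zeta$, is an isomorphism.

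For the second assertion I would turn to diagram \eqref{eq:PhiPsiI}, now exploiting the additional hypothesis. Its top row is a coequalizer with coequalizer object $\cH/\cH\big(\coinv{\cH}{\frac{\cH}{I}}\big)^+$, and its bottom row $\cH\cotensor^{\frac{\cH}{I}}\cH \rightrightarrows \cH \to \cH/I$ is a coequalizer exactly by the added assumption. The left vertical arrow is $\zeta$, an isomorphism by the first part, and the middle vertical arrow is $\id_\cH$. Applying the comparison principle for coequalizers to the commutative diagram \eqref{eq:PhiPsiI}, the induced map on coequalizer objects is an isomorphism; since the middle arrow is $\id_\cH$ and $\cH\big(\coinv{\cH}{\frac{\cH}{I}}\big)^+ \subseteq I$, this induced map is nothing but the canonical surjection $\cH/\cH\big(\coinv{\cH}{\frac{\cH}{I}}\big)^+ \twoheadrightarrow \cH/I$. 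An isomorphism that coincides with this canonical projection forces $\cH\big(\coinv{\cH}{\frac{\cH}{I}}\big)^+ = I$, which is the desired equality.

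The genuinely substantial step is the identification of the right-hand vertical map in \eqref{eq:Izeta} as the base change of $\beta$ along $-\tensor{\cH}\big(\frac{\cH}{I}\tensor{A}\cH\big)$, which is what secures its invertibility; everything else is a formal comparison of (co)equalizers together with the bookkeeping that the induced maps are the expected canonical ones. I would expect the only care needed beyond this to be checking that the relevant squares of \eqref{eq:Izeta} and \eqref{eq:PhiPsiI} commute for \emph{both} parallel arrows, so that the comparison principle genuinely applies; but this commutativity is already guaranteed by \Cref{prop:idealcoideal}.
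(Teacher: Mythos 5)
Your proposal is correct and follows essentially the same route as the paper: both parts are obtained by applying the (co)equalizer comparison principle to diagrams \eqref{eq:Izeta} and \eqref{eq:PhiPsiI}, using the hypothesis on $I$ to make the top row of \eqref{eq:Izeta} an equalizer and the invertibility of $\beta$ (hence of its base change $\beta \tensor{\cH} (\frac{\cH}{I} \tensor{A} \cH)$) for the vertical maps. Your explicit identification of the right-hand vertical arrow as a base change of $\beta$ is exactly the point the paper leaves implicit when it asserts that ``both vertical maps are isomorphisms.''
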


\begin{proof}
Under the additional hypotheses on $I$, the first conclusion follows from the commutativity of \eqref{eq:Izeta}, since now both lines are equalizers in the category of vector spaces and both vertical maps are isomorphisms. In such a case, if $\cH \cotensor^{\frac{\cH}{I}} \cH \rightrightarrows \cH \to \cH/I$ is also a coequalizer, then $\cH/I = \cH/\cH (\coinv{\cH}{\frac{\cH}{I}})^+$ because now both lines in \eqref{eq:PhiPsiI} are coequalizers and $\zeta$ is an isomorphism.
\end{proof}

Notice that if $\Mt{\cH} = \cH_{1 \tensor{} \op{A}}$ is $\op{A}$-flat or if $\coinv{\cH}{\frac{\cH}{I}} \subseteq \cH$ is left $\cH$-pure as an $\op{A}$-linear map (so that $\cH \tensor{\op{A}} \coinv{\cH}{\frac{\cH}{I}} \to \cH \tensor{\op{A}}\cH$ is still injective), then the first additional condition from \Cref{prop:PhiPsiI} is satisfied.

\begin{theorem}\label{thm:main}
Let $\cH$ be a Hopf algebroid such that $\sM{\cH} = {}_{{A\tensor{}\op{1}}}\cH$ is $A$-flat.
Then $\Phi$ and $\Psi$ induce a bijection:
\[
\xymatrix @R=0pt{
{\left\{ \begin{array}{c} \text{left ideal coideals } I \text{ for which} \\ \cH \cotensor^{\frac{\cH}{I}} \cH \rightrightarrows \cH \to \cH/I \text{ is a} \\ \text{coequalizer and } \coinv{\cH}{\frac{\cH}{I}} \subseteq \cH \text{ is} \\  \text{left and right }\cH\text{-pure} \\ \text{as } \op{A}\text{-bimodule} \end{array} \right\}} \ar@<+0.5ex>[r]^-{\Psi} & {\left\{ \begin{array}{c} \text{right } \cH\text{-comodule } \op{A}\text{-subrings } B \\ \text{via }t \text{ for which } B \to \cH \rightrightarrows \cH \tensor{B} \cH \\ \text{is an equalizer and } B \subseteq \cH \text{ is}  \\ \text{left and right }\cH\text{-pure as} \\ \op{A}\text{-bimodule and \eqref{eq:Bbeta} holds}\end{array} \right\}} \ar@<+0.5ex>[l]^-{\Phi} \\
I \ar@{|->}[r] & \coinv{\cH}{\frac{\cH}{I}} \\
\cH B^+ & B \ar@{|->}[l]
}
\]
\end{theorem}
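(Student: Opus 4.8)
The plan is to regard the statement as saying that the monotone Galois connection $(\Phi,\Psi)$ of \eqref{eq:PhiPsi} restricts to a pair of mutually inverse bijections between the two displayed families. A Galois connection always restricts to a bijection between its closed objects, and \Cref{prop:PhiPsiI} shows that every $I$ in the left-hand family is closed, in the sense that $\Phi\Psi(I)=I$, while \Cref{prop:PsiPhiB} shows that every $B$ in the right-hand family satisfies $\Psi\Phi(B)=B$. Thus the two round-trip identities are already available, and the entire content of the theorem reduces to checking that $\Psi$ and $\Phi$ are \emph{well defined}, i.e. that $\Psi$ maps the left-hand family into the right-hand one and $\Phi$ maps the right-hand family into the left-hand one.

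I would first dispose of $\Phi$. Given $B$ in the right-hand family, $I\coloneqq\cH B^+$ is a left ideal coideal by the generalities recalled around \eqref{eq:PhiPsi}. The requirement that $\cH\cotensor^{\frac{\cH}{I}}\cH\rightrightarrows\cH\to\cH/I$ be a coequalizer is nothing but the assertion that the bottom row of \Cref{prop:coidealsubalgebra}\ref{item:2.1a} is a coequalizer, so it holds automatically for ideals of the form $\cH B^+$. Finally \Cref{prop:PsiPhiB} gives $\coinv{\cH}{\frac{\cH}{I}}=\coinv{\cH}{\frac{\cH}{\cH B^+}}=B$, so that the purity demanded of $\coinv{\cH}{\frac{\cH}{I}}$ is exactly the purity assumed of $B$. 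Hence $\Phi(B)$ lies in the left-hand family.

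The substance of the proof is the well-definedness of $\Psi$. Given $I$ in the left-hand family, $B\coloneqq\coinv{\cH}{\frac{\cH}{I}}$ is a right $\cH$-comodule $\op{A}$-subring via $t$ by \Cref{prop:idealcoideal}, the equalizer condition $B\to\cH\rightrightarrows\cH\tensor{B}\cH$ is precisely the top row of \Cref{prop:idealcoideal}\ref{item:2.2a}, and the two purity requirements are part of the hypothesis on $I$. The only clause that is not immediate is that $B$ satisfies \eqref{eq:Bbeta}, and I would obtain it from the stronger fact that $B$ is compatible with the translation map, namely $\gamma(\iota_B(b))\in\img\big(B\tensor{\op{A}}\cH\big)$ for all $b\in B$, which implies \eqref{eq:Bbeta} as noted right after that equation. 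Writing $\gamma(b)=\sum b_+\tensor{\op{A}}b_-$ and combining the standard translation-map identity $\sum b_{+1}\tensor{A}b_{+2}\tensor{\op{A}}b_-=\sum b_1\tensor{A}b_{2+}\tensor{\op{A}}b_{2-}$ with the defining coinvariance relation $\sum\pi_I(b_1)\tensor{A}b_2=\pi_I(1)\tensor{A}b$, one checks that $u_1\tensor{\op{A}}\cH$ and $u_2\tensor{\op{A}}\cH$ agree on $\gamma(b)$, where $u_1=(\pi_I\tensor{A}\cH)\Delta$ and $u_2=\pi_I t\tensor{A}\cH$ are the maps whose equalizer defines $B$.

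What remains is to turn this ``equalized after tensoring'' statement into an actual factorization of $\gamma(b)$ through $B\tensor{\op{A}}\cH$, and this is precisely the role of purity: right $\cH$-purity of $B\subseteq\cH$ makes $\iota_B\tensor{\op{A}}\cH$ injective, and by the same mechanism exploited in the remark following \Cref{prop:PhiPsiI} this forces $B\tensor{\op{A}}\cH$ to remain the equalizer of the pair $u_1\tensor{\op{A}}\cH,\,u_2\tensor{\op{A}}\cH$; consequently $\gamma(b)\in\img\big(B\tensor{\op{A}}\cH\big)$, that is $b_+\in B$. Translation compatibility then yields \eqref{eq:Bbeta} through the multiplicativity $(hb)_+\tensor{\op{A}}(hb)_-=\sum h_+b_+\tensor{\op{A}}b_-h_-$ of $\gamma$ and the identity $\sum b_+b_-=s(\varepsilon(b))$, which vanishes for $b\in B^+$. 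I expect this descent step — converting an equality that holds only after applying $-\tensor{\op{A}}\cH$ into a genuine membership in $B\tensor{\op{A}}\cH$, for which the purity hypotheses together with the $A$-flatness of $\sM{\cH}$ are indispensable — to be the main obstacle; everything else is a formal consequence of the Galois connection together with \Cref{prop:PhiPsiI} and \Cref{prop:PsiPhiB}.
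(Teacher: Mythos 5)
Your proposal is correct and follows essentially the same route as the paper: the round-trip identities come from \Cref{prop:PsiPhiB} and \Cref{prop:PhiPsiI}, well-definedness of $\Phi$ from \Cref{prop:PsiPhiB} together with \Cref{prop:coidealsubalgebra}\ref{item:2.1a}, and the only substantive point is verifying \eqref{eq:Bbeta} for $\coinv{\cH}{\frac{\cH}{I}}$, which the paper also obtains by showing that the translation map $\gamma$ corestricts to $\coinv{\cH}{\frac{\cH}{I}}\tensor{\op{A}}\cH$ via exactly your mechanism (the equalizer description of the coinvariants, the compatibility of $\gamma$ with $\Delta$ from \cite[Proposition 3.7]{Schauenburg}, and purity to keep the tensored row an equalizer). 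Your element-wise unpacking of why $\gamma(B)\subseteq B\tensor{\op{A}}\cH$ implies \eqref{eq:Bbeta} is a detail the paper leaves implicit, but the argument is the same.
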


\begin{proof}
Let $I$ be a left ideal coideal as in the statement. Since $\iota_I \colon \coinv{\cH}{\frac{\cH}{I}} \to \cH$ is $\cH$-pure as a morphism of right $\op{A}$-modules and since the following diagram, whose rows are now equalizers, commutes sequentially (one needs to use \cite[Proposition 3.7]{Schauenburg})
\[
\xymatrix @C=40pt{
\coinv{\cH}{\frac{\cH}{I}} \ar@{.>}[d]_-{\gamma'} \ar[r] & \cH \ar@<+0.5ex>[rr]^-{\pi_I t \, \tensor{A}\cH} \ar@<-0.5ex>[rr]_-{(\pi_I \tensor{A} \cH) \, \Delta} \ar[d]_-{\gamma} & & {\displaystyle \Mto{\frac{\cH}{I}} \tensor{A} \sM{\cH} } \ar[d]^-{\frac{\cH}{I} \tensor{A} \gamma } \\
\Mt{\coinv{\cH}{\frac{\cH}{I}}} \tensor{\op{A}} \tM{\cH} \ar[r] & \Mt{\cH} \tensor{\op{A}} \tM{\cH} \ar@<+0.5ex>[rr]^-{\pi_I t \, \tensor{A}\cH \tensor{\op{A}} \cH} \ar@<-0.5ex>[rr]_-{(\pi_I \tensor{A} \cH) \, \Delta \tensor{\op{A}} \cH} & & {\displaystyle \Mto{\frac{\cH}{I}} \tensor{A} \sMt{\cH} \tensor{\op{A}} \tM{\cH} }
}
\]
we may conclude that $\gamma\big(\coinv{\cH}{\frac{\cH}{I}}\big) \subseteq \coinv{\cH}{\frac{\cH}{I}} \tensor{\op{A}}\cH$ and so \eqref{eq:Bbeta} is satisfied for $\coinv{\cH}{\frac{\cH}{I}}$. Therefore, and in view of \Cref{prop:idealcoideal}\ref{item:2.2a}, $\Psi$ from left to right is well-defined.

Now, let $B$ be a right $\cH$-comodule $\op{A}$-subring as in the statement. By \Cref{prop:PsiPhiB}, $B = \coinv{\cH}{\frac{\cH}{\cH B^+}}$ and so, in view of \Cref{prop:coidealsubalgebra}\ref{item:2.1a}, $\Phi$ from right to left is well-defined.

To show that they are inverse bijections, observe that $\Psi\Phi(B) = \coinv{\cH}{\frac{\cH}{\cH B^+}} = B$ by \Cref{prop:PsiPhiB} and $\Phi\Psi(I) = \cH \big(\coinv{\cH}{\frac{\cH}{I}}\big)^+ = I$ by \Cref{prop:PhiPsiI}.
\end{proof}

\section{The Hopf algebra case}\label{sec:HopfAlgs}

Let $H$ be a Hopf algebra over the field $\K$. Since $H$ is, in particular, a Hopf algebroid over $\K$, we may apply the results from the previous sections to this case. As we are now working over a field, any extension is left and right pure and moreover \eqref{eq:Bbeta} is satisfied for any coideal subalgebra of a Hopf algebra. Therefore, \Cref{thm:main} rephrases as follows.

\begin{theorem}\label{thm:Hopfcase}
Let $H$ be a Hopf algebra over the field $\K$.
Then, the following is a bijection:
\begin{equation}\label{eq:Hopfcase}
\begin{gathered}
\xymatrix @R=0pt{
{\left\{ \begin{array}{c} \text{left ideal coideals} \\ \text{in } H \text{ for which} \\ H \cotensor^{\frac{H}{I}} H \rightrightarrows H \to H/I \\ \text{is a coequalizer} \end{array} \right\}} \ar@<+0.5ex>[r]^-{\Psi} & {\left\{ \begin{array}{c} \text{right coideal subalgebras} \\ \text{of } H \text{ such that} \\ B \to H \rightrightarrows H \tensor{B} H \\ \text{is an equalizer}\end{array} \right\}} \ar@<+0.5ex>[l]^-{\Phi} \\
I \ar@{|->}[r] & \coinv{H}{\frac{H}{I}} \\
H B^+ & B \ar@{|->}[l]
}
\end{gathered}
\end{equation}
\end{theorem}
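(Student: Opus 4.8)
The plan is to derive \Cref{thm:Hopfcase} as the specialisation of \Cref{thm:main} to the case $A = \K$, so the whole argument amounts to verifying that every technical hypothesis appearing in the Hopf algebroid statement becomes vacuous or automatic in the Hopf algebra setting. First I would observe that a Hopf algebra $H$ over $\K$ is in particular a left Hopf algebroid over $A = \K$, with $s = t = u_H$ the unit and $\op{A} = \K$ as well. Consequently the distinction between left and right $\op{A}$-actions collapses, the crossed product $\tak{\op{A}}$ reduces to the ordinary tensor product over $\K$, and the flatness requirement $\sM{\cH} = {}_{A \tensor{} \op{1}}\cH$ being $A$-flat is automatic, since every vector space is flat over a field. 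Thus the blanket hypothesis of \Cref{thm:main} is satisfied for free.

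Next I would dispatch the purity conditions. Since $A = \K$ is a field, every $\K$-linear map is pure: for any injection $f\colon M \to N$ of $\K$-vector spaces and any $\K$-module $P$, the functor $P \tensor{\K} (-)$ is exact and hence $P \tensor{\K} f$ is injective. In particular both the inclusion $\coinv{H}{\frac{H}{I}} \subseteq H$ and any coideal subalgebra inclusion $B \subseteq H$ are automatically left and right $\cH$-pure as $\op{A}$-bimodules, so these requirements may be dropped from both sides of the correspondence. This matches the remark preceding the statement that over a field any extension is left and right pure.

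The remaining item to clear is condition \eqref{eq:Bbeta}, namely $(p \circ \beta^{-1})(\cH B^+ \tensor{A} \cH) = 0$, together with the assertion that every coideal subalgebra is compatible with the translation map. Here I would invoke the fact that for a Hopf algebra the translation map is given explicitly by $\gamma(x) = \sum x_1 \tensor{} S(x_2)$, where $S$ is the antipode, and I would check that for any right coideal subalgebra $B$ this restricts to a map $\gamma'\colon B \to B \tensor{} H$ (equivalently $\gamma(B) \subseteq B \tensor{} H$), using that the coaction $\delta(b) = \sum b_0 \tensor{} b_1$ lands in $B \tensor{} H$ and the coideal/subalgebra compatibility. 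By the discussion preceding \Cref{prop:PsiPhiB}, the existence of such a $\gamma'$ is exactly what guarantees \eqref{eq:Bbeta}, so this condition is again automatic. I expect this verification to be the main obstacle, as it is the only one that genuinely uses the antipode rather than merely the field hypothesis; however it is a standard computation.

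Having shown that the coequalizer condition on $I$ and the equalizer condition on $B$ are the only surviving constraints — precisely the two conditions retained in the statement of \Cref{thm:Hopfcase} — I would conclude that the two index sets of \Cref{thm:main} coincide with those of \eqref{eq:Hopfcase}, and that $\Psi$ and $\Phi$ are the same inverse assignments $I \mapsto \coinv{H}{\frac{H}{I}}$ and $B \mapsto HB^+$. Therefore the bijection is inherited directly from \Cref{thm:main}, completing the proof.
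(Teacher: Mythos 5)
Your proposal is correct and follows essentially the same route as the paper, which likewise obtains \Cref{thm:Hopfcase} by specialising \Cref{thm:main} to $A=\K$, noting that purity is automatic over a field and that \eqref{eq:Bbeta} holds for any right coideal subalgebra. Your explicit verification of \eqref{eq:Bbeta} via $\gamma(b)=\sum b_1\tensor{}S(b_2)\in B\tensor{}H$ and the criterion preceding \Cref{prop:PsiPhiB} is exactly the intended justification, which the paper only asserts.
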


\Cref{thm:Hopfcase} extends a number of well-known cases from the literature:
\begin{enumerate}[label=(\alph*),ref=(\alph*)]
	\item In the celebrated \cite[Theorem 4.3]{Takeuchi:1972}, Takeuchi proved that the assignments $\Phi$ and $\Psi$ from \eqref{eq:PhiPsi} restrict to a bijective correspondence between normal Hopf ideals and sub-Hopf algebras of a commutative Hopf algebra. By \cite[Theorem 3.1]{Takeuchi:1972}, a commutative Hopf algebra $H$ is faithfully flat over any sub-Hopf algebra $B$ and hence, in view of \cite[13.1 Theorem]{Waterhouse}, $B$ belongs to the right-hand side of \eqref{eq:Hopfcase}. Moreover, in view of \cite[\S4]{Takeuchi:1972}, any normal Hopf ideal $I$ in a commutative Hopf algebra $H$ is of the form $HB^+$ for a certain sub-Hopf algebra $B$ and hence, by \Cref{prop:coidealsubalgebra}\ref{item:2.1a}, $I$ belongs to the left-hand side of \eqref{eq:Hopfcase}.
	
	\item In \cite{Newman}, Newman showed that $\Phi$ and $\Psi$ induce a bijection between left ideal coideals and sub-Hopf algebras of a cocommutative Hopf algebra $H$. Since \cite[Theorem 3.1]{Takeuchi:1972} also states that a cocommutative Hopf algebra $H$ is faithfully flat over any sub-Hopf algebra $B$, as before we have that $B$ belongs to the right-hand side of \eqref{eq:Hopfcase} by \cite[13.1 Theorem]{Waterhouse}, again. By \cite[Corollary 3.4]{Newman}, any left ideal coideal $I$ of a cocommutative Hopf algebra $H$ is of the form $HB^+$ for a certain sub-Hopf algebra $B$ and so, by \Cref{prop:coidealsubalgebra}\ref{item:2.1a}, $I$ belongs to the left-hand side of \eqref{eq:Hopfcase}.
	
	\item In \cite{Takeuchi-Relative}, Takeuchi proved that $\Phi$ and $\Psi$ restrict to bijective correspondences between:
	\begin{enumerate}[label=(\roman*),ref=(\roman*)]
	\item\label{item:3a} Right coideal subalgebras $B \subseteq H$ of a commutative Hopf algebra $H$ over which $H$ is faithfully flat and Hopf ideals $I \subseteq H$ such that $H$ is faithfully coflat as left (equivalently, right) $H/I$-comodule.	
	In this case as well, $H$ faithfully flat over $B$ entails that $B $ belongs to the right-hand side of \eqref{eq:Hopfcase}. Analogously, $H$ faithfully coflat as left $H/I$-comodule entails that $I$ belongs to the left-hand side of \eqref{eq:Hopfcase}. The last claim can be proved as one does for the faithfully flat one: the diagram of vector spaces 
	\[
	\xymatrix @C=25pt{
	H \cotensor^{\frac{H}{I}} H \cotensor^{\frac{H}{I}} H \ar@<+1ex>[rr]^-{H \tensor{} \varepsilon \tensor{} {H}} \ar@<-1ex>[rr]_-{\varepsilon \tensor{} H \tensor{} H} && H \cotensor^{\frac{H}{I}} H \ar@<+0.5ex>[r]^-{\pi_I \tensor{} H} \ar[ll]|-{H \tensor{} \Delta} & {\displaystyle \frac{H}{I} \cotensor^{\frac{H}{I}} H \cong H } \ar@<+0.5ex>[l]^-{\Delta_H}
	}
	\]
	is a split coequalizer and hence, by left faithful coflatness of $H$ over $H/I$, $H \cotensor^{\frac{H}{I}} H \rightrightarrows H \to H/I$ is a coequalizer, too.
	
	\item Hopf subalgebras $B \subseteq H$ of a cocommutative Hopf algebra $H$ and left ideal coideals $I \subseteq H$ such that $H$ is faithfully coflat as $H/I$-comodule. The situation is the same as for point \ref{item:3a}.
	\end{enumerate}
	
	\item In \cite[Theorem 1.3]{Masuoka}, Masuoka improved Newman's result by showing that if the coradical of a Hopf algebra $H$ is cocommutative, then $H$ is faithfully flat as a module over any coideal subalgebra whose coradical is preserved by the antipode, and $H$ is faithfully coflat as a comodule over any quotient left module coalgebra. Thus, one might deduce a $1-1$ correspondence between right coideal subalgebras $K$ such that $S(K_0) = K_0$ and left ideal coideals. By the usual trick of the split (co)equalizer, faithful flatness guarantees that the coideal subalgebras considered by Masuoka lie in the right-hand side set of \eqref{eq:Hopfcase} and faithful coflatness that the left ideal coideals lie in the left-hand side one.
	
	\item In \cite[Theorem 1.4]{Schneider-remarks}, Schneider takes advantage of \cite{Takeuchi-Relative} to show that $\Phi$ and $\Psi$ restrict to a bijective correspondence between normal Hopf subalgebras $B \subseteq H$ of a Hopf algebra $H$ such that $H$ is right faithfully flat over $B$ and normal Hopf ideals $I \subseteq H$ such that $H$ is right faithfully coflat over $H/I$. As above, faithful (co)flatness entails that the normal Hopf subalgebras $B$ belong to the right-hand side of \eqref{eq:Hopfcase} and the normal Hopf ideals $I$ to its left-hand side.
\end{enumerate}

\begin{remark}\label{rem:extending}
\Cref{thm:Hopfcase} concretely extends the previous cases, as there exist coideal subalgebras $B \subseteq H$ of a (even commutative) Hopf algebra $H$ which satisfy the condition of \Cref{thm:Hopfcase} but over which $H$ is not faithfully flat. For instance, $\C[X,Y] \subseteq \cO(\textrm{SL}_2(\C))$, where the inclusion corresponds to the projection on the first row (the details can be found in the forthcoming \Cref{ex:extending}).

At the same time, the additional conditions involving (co)equalizers in \Cref{thm:main} and \Cref{thm:Hopfcase} are necessary conditions, as \Cref{prop:coidealsubalgebra} and \Cref{prop:idealcoideal} show, and cannot be avoided. For example, the subbialgebra $B \coloneqq\C[X]$ of the Hopf algebra $H \coloneqq \C[X^{\pm 1}]$ of Laurent polynomials is strictly contained in the equalizer of $H \rightrightarrows H \tensor{B} H$ (because all the powers $X^z$ for $z \in \ZZ$ are therein as well). In this case, in fact, $B^+ = B(X-1)$ and so $HB^+ = H(X-1) = H^+ = HH^+$. See also \cite[Example 3.18]{AryanLaiachiPaoloJoost} for a Hopf algebroid version of this example.
\end{remark}

\begin{example}\label{ex:extending}
Consider $B \coloneqq \C[X,Y]$ and
\[H \coloneqq \cO(\textrm{SL}_2(\C)) = \C[A,B,C,D]/\langle AD - BC - 1\rangle = \C[a,b,c,d \mid ad-bc=1].\]
The former becomes a right coideal subalgebra of the latter via 
\[\varrho \colon \C[X,Y] \to \C[a,b,c,d \mid ad-bc=1], \qquad \begin{cases} X \mapsto a \\ Y \mapsto b \end{cases},\]
and with respect to
\begin{gather*}
\Delta
\begin{pmatrix}
a & b \\ c & d 
\end{pmatrix}
= \begin{pmatrix}
a & b \\ c & d 
\end{pmatrix}
\otimes
\begin{pmatrix}
a & b \\ c & d 
\end{pmatrix} 
\qquad \text{and} \qquad
\delta\begin{pmatrix}
X & Y 
\end{pmatrix}
= \begin{pmatrix}
X & Y
\end{pmatrix}
\otimes
\begin{pmatrix}
a & b \\ c & d 
\end{pmatrix}.
\end{gather*}
This is an example of a flat (see \cite[3.4. Theorem]{Masuoka-Wigner}) but not faithfully flat extension: if it was faithfully flat, then the induced morphism $\calg_\C\big(\cO(\textrm{SL}_2(\C)),\C\big) \to \calg_\C\big(\C[X,Y],\C\big)$ would have been surjective, but the map
\[
\begin{array}{ccccccc}
\mathrm{SL}_2(\C) & \cong & \calg_\C\big(\cO(\mathrm{SL}_2(\C)),\C\big) & \to & \calg_\C\big(\C[X,Y],\C\big) & \cong & \C^2 \\
A & \mapsto & \mathrm{ev}_{A} & \mapsto & \mathrm{ev}_{A} \circ \varrho & \mapsto & (a_{11},a_{12})
\end{array}
\]
cannot be surjective as $(0,0)$ is not in the image.

Let us check it satisfies the condition of \Cref{thm:Hopfcase}, that is, that $B$ is the equalizer of $H \rightrightarrows H \tensor{B} H$. Since we are in the commutative setting, $H \tensor{B} H$ is the pushout of $H \supseteq B \subseteq H$ and so it is (isomorphic to) $K \coloneqq \C[s,t,u,v,w,z\mid sv - tu = 1 = sz - tw]$ with 
\[
\eta_1 \colon H \to K, \qquad \begin{cases} a \mapsto s \\ b \mapsto t \\ c \mapsto u \\ d \mapsto v \end{cases} \qquad \text{and} \qquad \eta_2 \colon H \to K, \qquad \begin{cases} a \mapsto s \\ b \mapsto t \\ c \mapsto w \\ d \mapsto z \end{cases},
\]
corresponding to $h \mapsto h \tensor{B}1$ and $h \mapsto 1 \tensor{B} h$, respectively. Now, a linear basis for $H$ is provided by the set
\[\big\{a^ib^jc^k, b^mc^nd^l \mid i,j,k,m,n\geq 0, l\geq 1\big\}\]
Take
\[h \coloneqq \sum_{i=0}^r\sum_{j=0}^e\sum_{k=0}^f\lambda_{ijk}a^ib^jc^k + \sum_{m=0}^p\sum_{n=0}^q\sum_{l=1}^o\mu_{mnl}b^mc^nd^l \in H \qquad \text{such that} \qquad h \tensor{B} 1 = 1 \tensor{B} h.\]
That is,
\[\sum_{i=0}^r\sum_{j=0}^e\sum_{k=0}^f\lambda_{ijk}s^it^ju^k + \sum_{m=0}^p\sum_{n=0}^q\sum_{l=1}^o\mu_{mnl}t^mu^nv^l = \sum_{i=0}^r\sum_{j=0}^e\sum_{k=0}^f\lambda_{ijk}s^it^jw^k + \sum_{m=0}^p\sum_{n=0}^q\sum_{l=1}^o\mu_{mnl}t^mw^nz^l\]
in $K$. By Bergman's Diamond Lemma (see \cite{Bergman}), the elements
\begin{equation}\label{eq:basis}
\big\{s^it^ju^k,t^mu^nv^l,s^it^jw^k,t^mw^nz^l\mid k \geq 1, l\geq 1\big\}
\end{equation}
are linearly independent: in the terminology of \cite{Bergman}, a reduction system
\[S = \{\sigma = (W_\sigma,f_\sigma) \mid K = \C[s,t,u,v,w,z]/\langle W_\sigma - f_\sigma \mid\sigma \in S\rangle\}\] 
for which all the ambiguities are resolvable relative to the lexicographic ordering is given by 
\[\{(sv,tu+1), (sz,tw+1), (tuz,tvw + v - z)\}\] 
and the words in \eqref{eq:basis} are irreducible monomials with respect to this reduction system. Therefore,
\[\lambda_{ijk} = 0 \text{ for all } k\geq 1 \qquad \text{and} \qquad \mu_{mnl} = 0 \text{ for all } l\geq 1\]
and so 
\[h = \sum_{i=0}^r\sum_{j=0}^e\lambda_{ij0}a^ib^j \in B.\]
\end{example}

\end{document}